\newtheorem{teo}{Theorem}[section]
\newtheorem{coro}[teo]{Corollary}
\newtheorem{lema}[teo]{Lemma}
\newtheorem{prop}[teo]{Proposition}
\newtheorem{defi}[teo]{Definition}
\newtheorem{ex}[teo]{Example}
\newtheorem{question}[teo]{Question}
\DeclareMathOperator{\N}{\mathbb{N}}
\DeclareMathOperator{\Nt}{[\mathbb{N}]^2}
\DeclareMathOperator{\Q}{\mathbb{Q}}
\DeclareMathOperator{\h}{hom}
\DeclareMathOperator{\ch}{\overline{hom}}
\newcommand{\UR}{\mathcal{R}}
\newcommand{\cantor}{2^{\N}}
\definecolor{mygray}{gray}{0.8}
\title{Reconstruction of a coloring from its homogeneous sets}
\author{C. Pi\~{n}a}
\address{Departamento de Matem\'aticas, Facultad de Ciencias, Universidad de los Andes, C.P. 5101,  M\'erida- Venezuela.\newline
\indent Escuela de Matem\'aticas, Universidad Industrial de Santander, C.P. 680001, Bucaramanga - Colombia.}
\email{cpinarangel@gmail.com}
\author{C. Uzc\'{a}tegui}
\address{Escuela de Matem\'aticas, Universidad Industrial de Santander, C.P. 680001, Bucaramanga - Colombia.}
\email{cuzcatea@saber.uis.edu.co}
\thanks{The first author was supported by the Postdoctoral Program of Vicerrector\'{i}a de Investigaci\'{o}n y Extensi\'{o}n of the Universidad Industrial de Santander.}
\date{\today}
\begin{document}

\begin{abstract}
We study  the following reconstruction problem for colorings.  Given a  countable set $X$ (finite or  infinite), a coloring on $X$ is a function $\varphi: [X]^{2}\to \{0,1\}$, where $[X]^{2}$ is the collection of all  2-elements subsets of  $X$. A set $H\subseteq X$ is homogeneous for $\varphi$ when $\varphi$ is constant on $[H]^2$. Let $\h(\varphi)$ be the collection of all homogeneous sets for $\varphi$. The coloring $1-\varphi$ is called the complement of $\varphi$. We say that $\varphi$ is {\em reconstructible} up to complementation  from its homogeneous sets, if for any coloring $\psi$ on $X$ such that $\h(\varphi)=\h(\psi)$ we have that either $\psi=\varphi$ or $\psi=1-\varphi$.    We present several conditions for reconstructibility and non reconstructibility. For $X$ an infinite countable set, we show that there is a Borel way to recovering  a coloring from its homogeneous sets.

\end{abstract}

\subjclass[2010]{Primary 05D10,  03E15; Secondary 05C15}
\keywords{Graph reconstruction, coloring of pairs, maximal homogeneous sets, Borel selectors.}

\maketitle

\section{Introduction}

In this paper we study the following reconstruction problem for colorings. Given a countable set $X$ (finite or infinite), a coloring on $X$ is a function $\varphi: [X]^{2}\to \{0,1\}$, where $[X]^{2}$ is the collection of 2-element subsets of  $X$. Let $\h(\varphi)$ be the homogeneous sets for $\varphi$; that is, the collection of $H\subseteq X$ such that $\varphi$ is constant on $[H]^2$. Clearly,  $\h(\varphi)=\h(1-\varphi)$. We say that $\varphi$ is {\em reconstructible} up to complementation from its homogeneous sets,  if for any coloring $\psi$ on $X$ such that $\h(\varphi)=\h(\psi)$ we have that either $\psi=\varphi$ or $\psi=1-\varphi$.  
In the terminology  of graphs, we are  talking about  graphs that can be reconstructed (up to complementation) from the collection of their  cliques and independent sets.

This type of reconstruction  problem  was considered long time ago in \cite{Cameron1993} for finite graphs but apparently  was not pursued  any further. A somewhat similar problem was addressed in \cite{Pouzetetal2013,Kaddour2019,Pouzetetal2011}. They analyzed a variant of the well known  graph reconstruction conjecture (see \cite{bondy1991}), and studied conditions under which a pair of  graphs with the same homogeneous sets are isomorphic up to complementation. In this paper we study conditions under which a pair of graphs with the same homogeneous sets are equal up to complementation.

An example of a reconstructible coloring is given by the random graph. We extract from this example a general method for showing reconstrutibility which is quite useful. Suppose that for every $F\subseteq X$ with $|F|=4$ there is $Y\supseteq F$  such that  the restriction of $\varphi$ to $[Y]^2$ is reconstructible, then $\varphi$ is reconstructible.   In particular, whenever a  coloring $\varphi$ on $\N$ has infinitely many initial segments which are reconstructible, then $\varphi$ itself is reconstructible.  

The first example that we found of a non-reconstructible coloring is given by a partition of  $\N$ into two infinite sets. We associate to this partition  a coloring $\varphi$ where $\varphi(\{x,y\})=1$ if and only if  both $x$ and $y$ belong to the same part of the partition.   This example satisfies a very simple criterion for non reconstructibility: If there is a pair $\{x,y\}$ (an edge) such that $\varphi(\{x,z\})=1-\varphi(\{y,z\})$ for all $z\nin \{x,y\}$, then $\varphi$  is non-reconstructible. The reciprocal is not true. 
Such pairs (edges) will be called {\em critical}. We  show a characterization of  colorings that admits  a critical pair. 

In the example mentioned above of a coloring associated to a partition of $\N$ into two parts, the collection of its homogeneous sets has exactly two maximal elements with respect to inclusion. Motivated by that, we present some results relating  the structure of the family of maximal homogeneous sets to the reconstruction problem. 

In the last section of the paper we study the reconstruction problem  from a  descriptive set theoretic point of view.   For instance,  the collection of reconstructible colorings on $\N$ is a dense $G_\delta$ subset of the space of colorings $\{0,1\}^{[\N]^2}$, that is, from the Baire category point of view, almost every coloring is reconstructible. We can regard $\h(\varphi)$ as a closed subset of the Cantor space $\{0,1\}^{\N}$ (which will be denoted, as usual, by $\cantor$), thus  as an element of the hyperspace $K(\cantor)$, which is a Polish space endowed with the usual Vietoris topology. We show that there is a Borel way to recover a coloring from its homogeneous sets. More precisely,  there is Borel map $f:K(\cantor)\to \{0,1\}^{[\N]^2}$ such that $f(\h(\varphi))$ is a reconstruction of $\varphi$, i.e., 
$\h(f(\h(\varphi))=\h(\varphi)$.

To finish this introduction we comment about our original motivation. A collection $\mathcal{H}$ of subsets of $\N$ is  {\em tall}, if for every infinite set $A\subseteq \N$, there is an infinite set $B\in \mathcal{H}$ such that $B\subseteq A$.  Ramsey's Theorem says that $\h(\varphi)$ is tall for every coloring $\varphi$ on $\N$. 
Some  tall families admit a Borel selector, that is,  a Borel map such that given an infinite set $A$, the map selects an  infinite subset of $A$ belonging to the tall family (\cite{GrebikUzca2018}). 
The collection $\h(\varphi)$  is an important example of  a tall family admitting a Borel selector (\cite{GrebikUzca2018,HMTU2017}).  It is an open problem to find a  characterization of  those tall Borel  families that admit a Borel selector. A quite related question is to characterize when a tall Borel family $\mathcal{H}$ admits a coloring $\varphi$ such that $\h(\varphi)\subseteq \mathcal{H}$. In other words, when is it possible to extract from such  tall family $\mathcal{H}$ a  coloring $\varphi$ such that $\h(\varphi)\subseteq \mathcal{H}$? These considerations  lead naturally to a Borel reconstruction problem: Suppose $\mathcal{H}=\h(\varphi)$, can we recover  from $\mathcal{H}$, in a Borel way,  a coloring $\psi$  such that   $\h(\psi)=\mathcal{H}$?  In the last section of the paper we show that the answer is positive.

\section{Preliminaries}

We will use standard notation from set theory. Throughout the article, $X$ will denote a countable (finite or infinite) set. Given $k\in\N$, we will denote by $[X]^{k}$ the collection of all subsets of $X$ of size $k$, by $[X]^{<k}$ the subsets of $X$ of size strictly less than $k$, and by $[X]^{\leq k}$ the union $[X]^{k}\cup[X]^{<k}$. The collection of all finite subsets of $X$ will be denoted by $[X]^{<\omega}$.  $X^{<\omega}$  denotes the collection of all finite sequences of elements of $X$  and $X^{\leq n}$ the collection of  sequences of length at most $n$ of elements of $X$.

A {\em coloring} on a set $X$, is any mapping $\varphi:[X]^2\to \{0,1\}$. Whenever is clear from the context, we  identify 2 with $\{0,1\}$. For instance,  the collection of all colorings $\{0,1\}^{[X]^2}$  will be denoted by $2^{[X]^2}$. Given a coloring $\varphi$ on $X$ and $Y\subseteq X$, we will denote by $\varphi|_Y$ the restriction of $\varphi$ to $[Y]^2$. We say that $\psi$ extends $\varphi$, and write $\varphi\subseteq \psi$, whenever $\varphi$ is a coloring on $Y$, $\psi$ is a coloring on $X$, $Y\subseteq X$ and $\psi|_Y=\varphi$. We write $\varphi\subset \psi$ when $\varphi\subseteq \psi$ and $\varphi\neq \psi$. 
For $X$ infinite, the family of colorings  $2^{[X]^2}$ will be seen as a topological space with the usual product topology which makes it homeomorphic to $2^X$.

A partition of $X$ is a collection $(A_i)_{i\in I}$ of non empty subsets of $X$ such that $I\subseteq \N$, $X=\bigcup_{i\in I}A_i$, and $A_i\cap A_j=\emptyset$ for every $i\neq j$ in $I$. Given $X=\bigcup_{i\in I}A_i$ a partition of $X$, we let {\em the coloring associated to the partition} be the mapping $\varphi:[X]^2\to 2$ defined by $\varphi(\{x,y\})=1$ if and only if $x,y\in A_i$ for some $i\in I$. 
Given a linear ordering $(X,<)$ and $\{n,m\}\in [X]^2$, we denote by $\{n,m\}_<$ the fact that $n<m$. If $e=\{r_n\}_n$ is an enumeration of $\Q$, the Sierpi\'{n}ski coloring $\varphi_e:[\N]^2\to 2$, associated to $e$, is defined by $\varphi_e(\{n,m\}_<)=1$ if and only if $r_n<r_m$. 

The  {\em random graph} $R=\langle \N, E\rangle$ (see \cite{Cameron1997}) has the following extension property. Given two finite disjoint subsets $A, B$  of $\N$, there is $n\in \N$ such that $\{x,n\}\in E$ for all $x\in A$ and $\{y,n\}\not\in E$ for all $y\in B$. 
This makes $R$  universal in the following sense.  Given a graph
$\langle{\N,G}\rangle$, there is a subset $X\subseteq\N$ such
that $\langle{\N,G}\rangle$ and $\langle {X, E|_X}\rangle$ are isomorphic.  

Given a coloring $\varphi:[X]^2\to 2$, we say that $H\subseteq X$ is {\em $i$-homogeneous (for $\varphi$)} if $\varphi([H]^2)=\{ i\}$ for  $i\in\{0,1\}$. This notion is clearly trivial if $|H|=2$, so we assume that an homogeneous set has at least 3 elements. Denote by $\h(\varphi)$ the set of homogeneous sets for $\varphi$; that is, 
$$
\h(\varphi)=\{H\subseteq X:\varphi\text{ is constant on }[H]^2\}.
$$

\begin{prop}
\label{triangulos}
Let $\varphi$ and $\psi$ be two colorings on a set $X$. Then  $\h(\varphi)=\h(\psi)$ if and only if  $\h(\varphi)\cap [\N]^{3}=\h(\psi)\cap [\N]^{3}$.
\end{prop}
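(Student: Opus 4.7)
The forward direction is immediate, since $[X]^3$ is contained in the collection $\{S\subseteq X : |S|>2\}$ on which $\h(\varphi)$ and $\h(\psi)$ are defined. So the plan focuses entirely on the reverse implication.

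Assume $\h(\varphi)\cap [\N]^3 = \h(\psi)\cap[\N]^3$, and let $H\subseteq X$ with $|H|>2$ be $\varphi$-homogeneous. I want to show $H$ is $\psi$-homogeneous. The first observation is that $H$ being $\varphi$-homogeneous forces every $3$-element subset $T\subseteq H$ to be $\varphi$-homogeneous, hence by hypothesis to be $\psi$-homogeneous as well. So the task reduces to the following purely combinatorial statement: if every $3$-subset of $H$ is monochromatic for $\psi$, then $\psi$ is constant on $[H]^2$.

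To prove this, I would fix two arbitrary pairs $\{x,y\},\{w,z\}\in [H]^2$ and show $\psi(\{x,y\})=\psi(\{w,z\})$ by a short chaining argument through monochromatic triples. If the two pairs share a vertex, say $y=w$, the triple $\{x,y,z\}$ is $\psi$-monochromatic and gives the equality directly. If the two pairs are disjoint, I use the two triples $\{x,y,w\}$ and $\{y,w,z\}$: each is $\psi$-monochromatic, and they share the pair $\{y,w\}$, which lets me transfer the common color from $\{x,y\}$ to $\{y,w\}$ and then to $\{w,z\}$. Exchanging the roles of $\varphi$ and $\psi$ gives the converse inclusion $\h(\psi)\subseteq \h(\varphi)$.

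There is no real obstacle here; the only thing to be careful about is that monochromaticity of the $3$-subsets alone does not say all of them carry the \emph{same} color, which is precisely why the chaining via a shared pair is needed rather than a single triple. This is exactly the argument sketched in the paragraph preceding the proposition, so my proof essentially formalizes that sketch.
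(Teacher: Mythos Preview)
Your proof is correct and follows essentially the same route as the paper: the paper's argument (given in the paragraph just before the proposition) also picks two pairs $\{x,y\},\{w,z\}$ in $H$ and chains through the $\psi$-homogeneous triples $\{x,y,w\}$ and $\{y,w,z\}$. Your version is slightly more careful in separating out the case where the two pairs share a vertex (so that $\{x,y,w\}$ genuinely has three elements), but the idea is identical.
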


\begin{proof}
Suppose $\h(\varphi)\cap [\N]^{3}=\h(\psi)\cap [\N]^{3}$ and let $H$ be a homogeneous set for  $\varphi$. Let $\{x,y\},\{w,z\}$ be two different pairs in $[H]^2$, by hypothesis $\{x,y,w\}$ and $\{y,w,z\}$ are $\psi$-homogeneous, hence $H$ is $\psi$-homogeneous. 
\end{proof}

It is clear that if $\varphi$ is the coloring associated to the partition $X=\bigcup_{i\in I}A_i$, then $\h(\varphi)=\{H:H\subseteq A_i\text{ for some }i\in I\}\cup\{H:|H\cap A_i|\leq 1\text{ for every }i\in I\}$. On the other hand, if $\varphi_e$ is the Sierpi\'{n}ski coloring associated to  an enumeration $e=\{r_n\}_n$ of $\Q$, then $H\in\h(\varphi_e)$ if and only if $H$ is monotone respect to $e$; that is, if either $r_n<r_m$ for every $n<m$ in $H$, or $r_n\geq r_m$ for every $n<m$ in $H$. In general, it is well known,  that if $|X|\geq 6$, there is an homogeneous set of size 3. Furthermore, we recall that  Ramsey's Theorem states that every coloring $\varphi:[X]^2\to 2$ on an infinite set $X$ has an infinite homogeneous set.

A coloring $\varphi:[X]^2\to 2$ is said to be  {\em reconstructible} (up to complementation) from its homogeneous sets if given a coloring $\psi:[X]^2\to 2$ such that $\h(\varphi)=\h(\psi)$, we have that either $\varphi=\psi$ or $\varphi=1-\psi$.  Let $\UR$ be the collection of all reconstructible colorings, and let $\neg \UR$ be its complement. We will call a coloring {\em non-reconstructible} if it belongs to $\neg\UR$. Since $\h(\varphi)=\h(1-\varphi)$, we have that $\varphi\in\UR$ if and only if $1-\varphi\in\UR$. Finally, given $\varphi,\psi\in 2^{[X]^2}$, we say that $\psi$ is a {\em reconstruction} of $\varphi$, if $\h(\psi)=\h(\varphi)$ and we say it is a non-trivial reconstruction if in addition  $\psi\neq\varphi$ and $\psi\neq 1-\varphi$.

\section{Reconstructible colorings}

The aim of this section is  to present some sufficient conditions for the reconstructibility of a coloring.  On the one hand, we shall see that in order to determine if a coloring belongs to $\UR$, it is enough to ensure that some finite restrictions do. On the other, we will introduce properties $E_0$ and $E_1$, and we will see that any coloring with any of these properties is in $\UR$.

\subsection{Finitistic conditions for reconstructibility}\hfill

\medskip

Our first result is a very useful  criterion for reconstructibility. 

\begin{prop}
\label{4suffices}
Let $\varphi$ be a coloring on $X$. If for every $F\in [X]^{\leq 4}$ there is $Y\subseteq X$ such that $F\subseteq Y$ and $\varphi|_Y\in \UR$, then $\varphi\in \UR$.
\end{prop}

\begin{proof} Let $\psi$ be a coloring on $X$ such that $\h(\varphi)=\h(\psi)$. Suppose that for every $F\in [X]^{\leq 4}$ there is $Y\subseteq X$ such that $F\subseteq Y$ and $\varphi|_Y\in \UR$; and that there are $x,y\in X$ such that $\varphi(\{x,y\})=\psi(\{x,y\})$. 
We will show that $\varphi=\psi$.  Let $w,z\in X$ with $\{x,y\}\neq\{z,w\}$. By hypothesis, there is $Y\subseteq X$ such that $\{x,y,w,z\}\subseteq Y$ and $\varphi|_Y\in \UR$. We have $\h(\varphi|_Y)=\h(\psi|_Y)$, $\varphi|_Y\in \UR$   and $\varphi(\{x,y\})=\psi(\{x,y\})$, therefore $\varphi|_Y=\psi|_Y$. In particular, $\varphi(\{w,z\})=\psi(\{w,z\})$ and we are done.
\end{proof}

There are  colorings $\varphi\in \UR$ such that $\varphi|_F\not\in\UR$ for some $|F|\leq 4$ (see Example \ref{4sufficesB}).

\begin{coro}
Let $\varphi$ be a coloring on $\N$. Suppose that for infinitely many $n$, $\varphi |_{\{0, \cdots, n\}}\in \UR$, then $\varphi\in \UR$. 
\end{coro}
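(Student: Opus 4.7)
The plan is to derive this directly from Theorem \ref{4suffices}. To invoke that theorem, I need to show that for every $F \in [\N]^{\leq 4}$ there exists a finite $Y \subseteq \N$ with $F \subseteq Y$ and $\varphi|Y \in \UR$.

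First, fix an arbitrary $F \in [\N]^{\leq 4}$. Since $F$ is finite, let $N = \max F$ (or $N = 0$ if $F = \emptyset$). By hypothesis, the set of $n \in \N$ for which $\varphi|\{0,\ldots,n\} \in \UR$ is infinite, so in particular there is some such $n \geq N$. Setting $Y = \{0,1,\ldots,n\}$, we have $F \subseteq Y$, $Y$ is finite, and $\varphi|Y \in \UR$.

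Therefore the hypothesis of Theorem \ref{4suffices} is satisfied, and we conclude $\varphi \in \UR$. There is no real obstacle here; the corollary is essentially a packaging of Theorem \ref{4suffices} together with the observation that an increasing family of initial segments whose union is $\N$ absorbs any finite subset of $\N$.
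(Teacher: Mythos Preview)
Your proof is correct and is exactly the intended argument: the paper states this corollary immediately after Theorem~\ref{4suffices} without proof, as it follows directly by taking $Y=\{0,\dots,n\}$ for any sufficiently large $n$ from the hypothesized infinite set.
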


The previous result naturally suggests the following problem. 

\begin{question}
\label{segmentos-UR} Let $\varphi$ be a reconstructible coloring on $\N$ and $F\subseteq\N$ be  a finite set. Is there  a finite set $G\supseteq F$ such that $\varphi |_G\in \UR$?
\end{question}

\medskip

Proposition \ref{4suffices} stresses the importance of knowing examples of colorings on finite sets belonging to $\UR$. 
Our first example is trivial but we include it for future reference. 

\begin{ex}
\label{constant}
Any constant coloring belongs to $\UR$.
\end{ex}

The next result provides a general method to extend any coloring on a finite set to a reconstructible one. It will be used several times in the sequel. 

\begin{prop}
\label{propertyE}
Let $\varphi_0$ be any coloring of the pairs of $F=\{x,y,w,z\}$. Let $a$ and $b$ be two elements not in $F$. The  coloring $\varphi$ on $F\cup \{a,b\}$ extending  $\varphi_0$ as in Figure \ref{figpropertyE}  is reconstructible (where the colors between the elements of $F$ are not drawn).

\bigskip

\begin{minipage}{.5\textwidth}
\centering
\begin{tikzpicture}[scale=0.7]
\filldraw[black] (0,0) circle (1.5pt) node[anchor=east] {\scriptsize$x$};
\filldraw[black] (2,4) circle (1.5pt) node[anchor=east] {\scriptsize$a$};
\filldraw[black] (4,4) circle (1.5pt) node[anchor=west] {\scriptsize$b$};
\filldraw[black] (2,0) circle (1.5pt) node[anchor=east] {\scriptsize$y$};
\filldraw[black] (4,0) circle (1.5pt) node[anchor=east] {\scriptsize$z$};
\filldraw[black] (6,0) circle (1.5pt) node[anchor=west] {\scriptsize$w$};

\draw[thick] (0,0) -- (2,4);
\draw[thick] (0,0) -- (4,4);
\draw[thick] (2,4) -- (4,4);
\draw[thick] (2,0) -- (2,4);
\draw[thick] (2,0) -- (4,4);
\draw[thick] (2,4) -- (4,0);
\draw[thick] (4,4) -- (4,0);
\draw[thick] (4,4) -- (6,0);
\draw[thick] (2,4) -- (6,0);
%
 \end{tikzpicture}
 \captionof{figure}{Partial drawing of $\varphi$}
 \label{figpropertyE}
\end{minipage}
\begin{minipage}{.5\textwidth}
\centering
\begin{tikzpicture}[scale=0.7]
\filldraw[black] (10,0) circle (1.5pt) node[anchor=east] {\scriptsize$x$};
\filldraw[black] (12,4) circle (1.5pt) node[anchor=east] {\scriptsize$a$};
\filldraw[black] (14,4) circle (1.5pt) node[anchor=west] {\scriptsize$b$};
\filldraw[black] (12,0) circle (1.5pt) node[anchor=east] {\scriptsize$y$};
\filldraw[black] (14,0) circle (1.5pt) node[anchor=east] {\scriptsize$z$};
\filldraw[black] (16,0) circle (1.5pt) node[anchor=west] {\scriptsize$w$};

\draw[mygray,thick] (10,0) -- (12,4);
\draw[mygray,thick] (10,0) -- (14,4);
\draw[mygray,thick] (12,4) -- (14,4);
\draw[mygray,thick] (12,0) -- (12,4);
\draw[mygray,thick] (12,0) -- (14,4);
\draw[mygray,thick] (12,4) -- (14,0);
\draw[mygray,thick] (14,4) -- (14,0);
\draw[mygray,thick] (14,4) -- (16,0);
\draw[mygray,thick] (12,4) -- (16,0);
 
\end{tikzpicture}
\captionof{figure}{Partial drawing of $\psi$}
\label{figpropertyEpsi}
\end{minipage}

\end{prop}

\bigskip

\begin{proof}
Let $X=F\cup \{a,b\}$ and  $\psi$ be a coloring of $[X]^2$ such that $\h(\varphi)=\h(\psi)$. Suppose  there is  $\{u,v\}\in [X]^2$ such that $\varphi(\{u,v\})=1-\psi(\{u,v\})$. 
We will show that $\varphi=1-\psi$. 
We will assume that $u=x$ and $v=y$. A completely analogous argument works for the other cases. 
Notice that $\{a,b,x\},\{a,b,y\}, \{a,b,z\}, \{a,b,w\}\in\h(\varphi)=\h(\psi)$. Let $i=\varphi(\{x,a\})$.  Thus $\psi$  looks as depicted in Figure \ref{figpropertyEpsi}. Again the colors between elements of  $F$ are not drawn.

We consider two cases:

{\em Case 1:} Suppose $\varphi(\{x,y\})=i$.  It follows that  $\{x,y,a\}\in \h(\varphi)=\h(\psi)$, and therefore $\psi(\{a,z\})=\psi(\{a,b\})=\psi(\{a,w\})=\psi(\{a,y\})=\psi(\{x,y\})=1-i$. Now notice that $\varphi(\{z,w\})=\psi(\{z,w\})$ would imply $\{a,z,w\}\in\h(\varphi)\triangle\h(\psi)$ which is a contradiction. It follows that $\varphi(\{z,w\})=1-\psi(\{z,w\})$.

\medskip

{\em Case 2:} Suppose $\varphi(\{x,y\})=1-i$.  Then $\{x,y,a\}\notin\h(\varphi)=\h(\psi)$. But, $\psi(\{a,x\})=\psi(\{a,b\})=\psi(\{a,y\})$, thus $\psi(\{a,x\})\neq\psi(\{x,y\})=i$ and therefore $\psi(\{a,w\})=\psi(\{a,z\})=\break\psi(\{a,x\})=1-i$. Then, we argue as in the previous case to see that $\varphi(\{z,w\})\neq\psi(\{z,w\})$.

In either case, we have that $\varphi=1-\psi$. 
\end{proof}

From the previous result  we get the following more general fact. 

\begin{prop}
\label{extensionUR}
Let $\varphi$ be a coloring on $X$ and $a,b\not\in X$. Then, there is a coloring $\psi$ on $X\cup\{a,b\}$ such that $\varphi\subset \psi$ and $\psi\in \UR$. 
\end{prop}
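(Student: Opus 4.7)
The plan is to take the extension suggested by Example~\ref{propertyE}: define $\psi$ on $X\cup\{a,b\}$ by $\psi|X=\varphi$ together with $\psi(\{a,b\})=1$ and $\psi(\{a,x\})=\psi(\{b,x\})=1$ for every $x\in X$. Intuitively, $a$ and $b$ are a pair of ``universal $1$-vertices'' joined to each other and to every point of $X$ by the colour $1$, so that their triangles with the rest should rigidify the whole coloring up to complementation, regardless of what $\varphi$ looks like.

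To verify $\psi\in\UR$, I would take any $\psi'\in 2^{[X\cup\{a,b\}]^2}$ with $\h(\psi')=\h(\psi)$ and argue in two steps. First, for each $x\in X$ the triangle $\{a,b,x\}$ is $1$-homogeneous for $\psi$, hence lies in $\h(\psi')$; this forces $\psi'(\{a,b\})=\psi'(\{a,x\})=\psi'(\{b,x\})$ for every $x\in X$. Call this common value $i\in\{0,1\}$: when $i=1$, $\psi'$ agrees with $\psi$ on every pair meeting $\{a,b\}$, and when $i=0$ it agrees with $1-\psi$ on those pairs.

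Second, for each pair $\{x,y\}\in[X]^2$ I would compare the triangle $\{x,y,a\}$ under $\psi$ and under $\psi'$. Since $\psi(\{x,a\})=\psi(\{y,a\})=1$, this triangle lies in $\h(\psi)$ exactly when $\varphi(\{x,y\})=1$. Since $\psi'(\{x,a\})=\psi'(\{y,a\})=i$, it lies in $\h(\psi')$ exactly when $\psi'(\{x,y\})=i$. Equating the two (using $\h(\psi)=\h(\psi')$) gives $\psi'(\{x,y\})=\varphi(\{x,y\})$ when $i=1$ and $\psi'(\{x,y\})=1-\varphi(\{x,y\})$ when $i=0$. Combined with the first step, this forces $\psi'\in\{\psi,\,1-\psi\}$, as required.

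The approach has no real obstacle: the argument works uniformly in $|X|$, and the small cases $|X|\leq 1$ are either trivially covered or handled by step one alone (since $[X]^2=\emptyset$). The only point of care is to keep the two cases $i=0$ and $i=1$ coherent across both steps so that $\psi'$ is consistently $\psi$ or consistently $1-\psi$. Alternatively, one could derive the conclusion from Theorem~\ref{4suffices} together with Example~\ref{propertyE}, by choosing for each $F\in[X\cup\{a,b\}]^{\leq 4}$ a $6$-element $Y\supseteq F$ containing $a,b$ and four elements of $X$ (possible when $|X|\geq 4$); but the direct triangle argument above seems shorter and avoids the extra bookkeeping for small $|X|$.
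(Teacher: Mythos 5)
Your construction is exactly the one the paper uses (add $a,b$ joined to each other and to all of $X$ by colour $1$), but your verification is genuinely different: the paper checks the hypothesis of Theorem~\ref{4suffices} by pointing to the six-point configuration of Example~\ref{propertyE}, whereas you argue directly with triangles. Your two-step argument is correct: every triangle $\{a,b,x\}$ is $\psi$-homogeneous, so any $\psi'$ with $\h(\psi')=\h(\psi)$ must give all edges meeting $\{a,b\}$ the single value $i=\psi'(\{a,b\})$; then for $\{x,y\}\in[X]^2$ the triangle $\{x,y,a\}$ lies in $\h(\psi)$ iff $\varphi(\{x,y\})=1$ and in $\h(\psi')$ iff $\psi'(\{x,y\})=i$, which forces $\psi'=\psi$ when $i=1$ and $\psi'=1-\psi$ when $i=0$, coherently with the first step. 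What your route buys is uniformity and economy: it needs only the triangle criterion (Proposition~\ref{triangulos} in spirit), it avoids the mild bookkeeping the paper's route requires when $|X|<4$ (where one cannot literally pick four points of $X$ to match Example~\ref{propertyE}), and it exhibits the rigidity of the extension globally in one pass. What the paper's route buys is re-use of already established machinery (Theorem~\ref{4suffices} plus the worked six-point example), at the cost of that small-case care. Either argument is a complete proof.
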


\begin{proof} Define $\psi$ on $X\cup\{a,b\}$ by $\psi(\{a,b\})=\psi(\{a,x\})=\psi(\{b,x\})=1$ for all $x\in X$, and $\varphi\subset \psi$. From Proposition \ref{propertyE} we get that  $\psi $ satisfies the hypothesis of Proposition \ref{4suffices}, hence $\psi\in \UR$.\end{proof}

As an application of Proposition \ref{4suffices} we have the following result about a  coloring  on binary sequences. 

\begin{prop}
\label{binarytree}
The coloring associated to the extension ordering on binary sequences is reconstructible. 
\end{prop}

\begin{proof}
Let $\varphi$ be the coloring associated to the extension ordering on $2^{<\omega}$, i.e., $\varphi(\{x,y\})=1$ if and only if $y$ is an extension of $x$.
We first show the result for the restriction of $\varphi$ to $X=2^{\leq 3}$. This coloring looks as despicted in Figure \ref{figbinarytree}, where only some $1$-edges are drawn. 

\bigskip

\hspace*{-1cm}
\begin{center}
\begin{tikzpicture}[scale=0.8]
\filldraw[black] (0,0) circle (1.5pt) node[anchor=north] {\scriptsize$a$};
\filldraw[black] (-3.5,1) circle (1.5pt) node[anchor=north] {\scriptsize$b$};
\filldraw[black] (3.5,1) circle (1.5pt) node[anchor=north] {\scriptsize$c$};
\filldraw[black] (-4,2) circle (1.5pt) node[anchor=east] {\scriptsize$d$};
\filldraw[black] (-2,2) circle (1.5pt) node[anchor=west] {\scriptsize$e$};
\filldraw[black] (2,2) circle (1.5pt) node[anchor=east] {\scriptsize$f$};
\filldraw[black] (4.5,2) circle (1.5pt) node[anchor=west] {\scriptsize$g$};
\filldraw[black] (-3.5,3) circle (1.5pt) node[anchor=south] {\scriptsize$h$};
\filldraw[black] (-4.5,3) circle (1.5pt) node[anchor=south] {\scriptsize$i$};
\filldraw[black] (-2.5,3) circle (1.5pt) node[anchor=south] {\scriptsize$j$};
\filldraw[black] (-0.5,3) circle (1.5pt) node[anchor=south] {\scriptsize$k$};
\filldraw[black] (1,3) circle (1.5pt) node[anchor=south] {\scriptsize$l$};
\filldraw[black] (3,3) circle (1.5pt) node[anchor=south] {\scriptsize$m$};
\filldraw[black] (4,3) circle (1.5pt) node[anchor=south] {\scriptsize$n$};
\filldraw[black] (5,3) circle (1.5pt) node[anchor=south] {\scriptsize$o$};

  \draw[thick] (0,0) -- (-3.5,1);
  \draw[thick] (-3.5,1) -- (-4,2);
  \draw[thick] (-3.5,1) -- (-2,2);
  \draw[thick] (-4,2) -- (-4.5,3);
  \draw[thick] (-4,2) -- (-3.5,3);
  \draw[thick] (-2,2) -- (-2.5,3);
 \draw[thick] (-2,2) -- (-0.5,3);
 \draw[thick] (0,0) -- (3.5,1);
\draw[thick] (3.5,1) -- (2,2);
\draw[thick] (3.5,1) -- (4.5,2);
\draw[thick] (2,2) -- (1,3);
\draw[thick] (2,2) -- (3,3);
\draw[thick] (4.5,2) -- (4,3);
\draw[thick] (4.5,2) -- (5,3);
 
\end{tikzpicture}
\captionof{figure}{Partial drawing of $\varphi$}
\label{figbinarytree}
\end{center}

\bigskip

We show that $\varphi\in \UR$. Let $\psi$ be a coloring of $[X]^2$ such that $\h(\varphi)=\h(\psi)$. Notice that every branch and every antichain is homogeneous (for both colorings). Suppose that $\varphi(\{x,y\})=1-\psi(\{x,y\})$ for some $\{x,y\}\in[X]^2$. We need to show that $\varphi=1-\psi$.  We consider the case $x=d$ and $y=b$, the other cases are similar. 
Then all branches starting on $i$, $h$, $j$ or $k$ are of color $1$ for $\varphi$ and of color $0$ for $\psi$. Since $\{i,h,d\}$ is not homogeneous, then $\psi(\{i,h\})=1$. Therefore $\{i,h,j,k,l,m,n,o\}$ is $1$-homogeneous for $\psi$. 
Since $\{l,m,f\}$ is not homogeneous, then $\psi(\{l,f\})= 0$ or $\psi(\{m,f\})=0$. In either case, we get that all branches starting from $l$, $m$, $n$ or $o$ are all $0$-homogeneous for $\psi$. As before, we conclude that $\{d,e,f,g\}$ and $\{b,c\}$ are 1-homogeneous for $\psi$. This shows that $\varphi=1-\psi$. 

Now we finish the proof of the proposition. 
To see that $\varphi\in \UR$, we use Proposition \ref{4suffices}. Let $F\subset 2^{<\omega}$ be a set with at most 4 elements. It is easy to verify that $\langle F,\varphi|_F\rangle$ is isomorphic (as a graph) to a subset of $\langle X,\varphi|_ X\rangle$. From the result above,  $\varphi|_X\in \UR$ and we are done. 
\end{proof}

The following examples will be needed later in the paper. 

\begin{ex}
\label{particion}
Let $X=\{0,1,2,3,4,5\}$ and consider the partition of $X$ given by $\{0,1,2\}$, $\{3,4\}$ and $\{5\}$. 
Let $\varphi$ be the coloring associated to  this partition. It is depicted in Figure \ref{figpartition},  where we only draw the pairs with color 1, i.e. those $\{x,y\}$ which are  a subset of a part of the partition. 

\begin{center}
\begin{tikzpicture}[scale=1.6]
\draw[thick] (0,0) -- (0,1);
\draw[thick] (0,1) -- (0,2);
\draw[thick] (1,0) -- (1,1);
\draw[thick] (0,0) .. controls (-0.5,1.5) and (-0.5,0.5) .. (0,2);

\filldraw[black] (0,0) circle (1pt) node[anchor=east] {\scriptsize $0$};
\filldraw[black] (0,1) circle (1pt) node[anchor=east] {\scriptsize $1$};
\filldraw[black] (0,2) circle (1pt) node[anchor=east] {\scriptsize $2$};

\filldraw[black] (1,0) circle (1pt) node[anchor=east] {\scriptsize $3$};
\filldraw[black] (1,1) circle (1pt) node[anchor=east] {\scriptsize $4$};
\filldraw[black] (2,0) circle (1pt) node[anchor=west] {\scriptsize $5$};

\end{tikzpicture}
\captionof{figure}{}
\label{figpartition}
\end{center}

We claim that $\varphi|_Y\in \UR$ for every $Y\subseteq X$. We show it for  $X=Y$, the rest is similar. Let $\psi$ be a coloring on $X$ such that $\h(\varphi)=\h(\psi)$. Notice that $\{0,4,5\}$, $\{0,3,5\}$  and $\{2,4,5\}$ are $\varphi$-homogeneous and $\{0,1,3\}$ and $\{3,4,5\}$ are  not $\varphi$-homogeneous. Since $\h(\varphi)=\h(\psi)$, $\psi(\{0,1\})= \psi(\{3,4\})=1-\psi(\{0,3\})$. Thus, $\psi$ is either $\varphi$ of $1-\varphi$. 
\end{ex}

\begin{prop}
\label{particion2}
Let $\varphi$ be a coloring on a set $F=\{a,b,c,d, e\}$, and let $G=\{x,y,z\}$ be disjoint from $F$. Let  $X=F\cup G$, and  $\psi$ be the extension of $\varphi$ to  $X$ as depicted in Figure \ref{figpartition2}, where  we only draw the pairs $\{u,v\}$ of  color 1 with $u\in F$and $v\in G$. Then $\psi\in \UR$.

\begin{center}
\begin{tikzpicture}[scale=1.6]
\draw[thick] (-1,0) -- (0,1);
\draw[thick] (0,0) -- (0,1);
\draw[thick] (1,0) -- (1,1);
\draw[thick] (2,0) -- (2,1);

\filldraw[black] (-1,0) circle (1pt) node[anchor=north] {\scriptsize$a$};
\filldraw[black] (0,0) circle (1pt) node[anchor=north] {\scriptsize$b$};
\filldraw[black] (0,1) circle (1pt) node[anchor=south] {\scriptsize$x$};
\filldraw[black] (1,0) circle (1pt) node[anchor=north] {\scriptsize$c$};
\filldraw[black] (1,1) circle (1pt) node[anchor=south] {\scriptsize$y$};
\filldraw[black] (2,0) circle (1pt) node[anchor=north] {\scriptsize$d$};
\filldraw[black] (2,1) circle (1pt) node[anchor=south] {\scriptsize $z$};
\filldraw[black] (3,0) circle (1pt) node[anchor=north] {\scriptsize$e$};
\end{tikzpicture}
\captionof{figure}{Partial drawing of $\psi$.}
\label{figpartition2}

\end{center}
\end{prop}
\begin{proof}
Let $\rho$ be a coloring on $X$ such that $\h(\psi)=\h(\rho)$. We assume without lost of generality that $\rho(\{x,a\})=\psi(\{x,a\})=1$, and we prove that $\rho=\psi$. Using the same kind of arguments as in Example \ref{particion}, it is easy to verify that $\rho(\{u,v\})=\psi(\{u,v\})$ for every 
$u\in F$ and $v\in G$, and  also for $u, v\in G$. So, it  remains to show that $\rho $ also extends $\varphi$. Indeed, given $u, v\in F$, there is $w\in \{x,y,z\}$ such that $\psi(\{w, u\})=\psi(\{w,v\})=0$. Thus, $\rho(\{w, u\}) =\rho(\{w,v\})=0$.  We need to show that  $\psi(\{ u,v\})=\rho(\{u,v\})$. Suppose otherwise, $\psi(\{ u,v\})\not=\rho(\{u,v\})$. Then, $\{u,v,w\}\in \h(\rho)\triangle \h(\psi)$, a contradiction.
\end{proof}

\begin{ex}
\label{recmax}
The coloring $\varphi$  on $\{0,1,2,3,4,5\}$ depicted in Figure \ref{figrecmax}  is reconstructible.

\begin{center}
\begin{tikzpicture}[scale=1.2]
\draw[thick] (0,0) -- (0,1);
\draw[thick] (0,1) -- (0,2);

\draw[thick] (0,0) .. controls (-0.5,1.5) and (-0.5,0.5) .. (0,2);

\draw[thick] (2,0) -- (2,1);
\draw[gray] (0,0) -- (2,0);
\draw[thick] (0,2) -- (1,-1);
\draw[gray] (0,2) -- (2,1);
\draw[gray] (0,2) -- (2,0);
\draw[gray] (0,0) -- (2,1);
\draw[gray] (0,1) -- (2,1);
\draw[gray] (0,1) -- (2,0);
\draw[thick] (0,0) -- (1,-1);
\draw[thick] (0,1) -- (1,-1);
\draw[thick] (2,1) -- (1,-1);
\draw[thick] (2,0) -- (1,-1);

\filldraw[black] (0,0) circle (1pt) node[anchor=east] {\scriptsize $1$};
\filldraw[black] (0,1) circle (1pt) node[anchor=east] {\scriptsize $2$};
\filldraw[black] (0,2) circle (1pt) node[anchor=east] {\scriptsize $3$};

\filldraw[black] (2,1) circle (1pt) node[anchor=west] {\scriptsize $5$};
\filldraw[black] (2,0) circle (1pt) node[anchor=west] {\scriptsize $4$};
\filldraw[black] (1,-1) circle (1pt) node[anchor=north] {\scriptsize $0$};

\end{tikzpicture}
\captionof{figure}{Coloring $\varphi$}
\label{figrecmax}

\end{center}
Let $\psi$ be a reconstruction of $\varphi$, i.e. $\h(\varphi)=\h(\psi)$. Notice that any  homogeneous set is a subset of either $H_1=\{0,1,2,3\}$ or $H_2=\{0,4,5\}$.  It is easy to check that if $\psi$ gives the same color, say black, to $H_1$ and $H_2$, then $\psi=\varphi$.  So, suppose $\psi$ gives to $H_1$ and $H_2$ color black and gray, respectively. Then one has to consider  two completely analogous cases depending on  whether  $\psi(\{1,4\}$ is black or gray. 
Suppose it is black. Since $\psi(\{1,2\})$ is black and $\{1,2,4\}$ is not homogeneous, $\psi(\{2,4\})$ is gray. Since $\{2,4,5\}$ is not homogeneous, $\psi(\{2,5\})$ is black. Analogously, one conclude that $\psi(\{1,5\})$ is gray.  Since $\{3,4,5\}$ is not homogeneous, $\{3,5\}$ must be black. On the other hand, since $\{2,3,5\}$ is not homogeneous, $\{3,5\}$ must be gray. A contradiction.

\end{ex}

\subsection{Properties $E_0$ and $E_1$}\hfill

Now we introduce a property for a coloring stronger than being in $\UR$. It was motivated by the extension property of the random graph. Given $i\in\{0,1\}$, we say that a coloring $\varphi:[\N]^2\longrightarrow 2$ has the {\em property $E_i$} if for every finite set $F\subset \N$ there is $z\in\N\setminus F$ such that $\varphi(\{z,x\})=i$ for every $x\in F$. 

It is clear that if $\varphi$ has the property $E_0$ then $1-\varphi$ has the property $E_1$. So, for our reconstruction problem, we could only work with either $E_0$ or $E_1$.  The random graph clearly has the property $E_i$, for $i\in\{0,1\}$. Another example is the following.

\begin{prop}
\label{general-Sierpinski}
Let $R\subseteq \N\times \N$ be a strict linear ordering on $\N$. Let $\varphi_R$ be defined by 
$\varphi_R (\{n,m\}_<)=1$ if and only if $(n,m)\in R$. If $\langle \N, R\rangle$ does not have a maximal (resp. a minimal) element, then  $\varphi_R$ has the property $E_1$ (resp. $E_0$).
\end{prop}

\begin{proof}
Suppose $\langle \N, R\rangle$ does not have a maximal element. Let $F\subseteq \N$ be a finite set. Then, there is $z\in \N$ such that $(x,z)\in R$ for all $x\in F$. Thus, $\varphi_R(\{z,x\})=1$ for all $x\in F$. 
\end{proof}

\begin{prop}
\label{EiUR}
Every coloring with property $E_i$, $i\in\{0,1\}$, belongs to $\UR$.
\end{prop}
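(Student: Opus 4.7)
The plan is to reduce to Theorem~\ref{4suffices} by using Example~\ref{propertyE}. Since $\varphi$ has property $E_0$ if and only if $1-\varphi$ has property $E_1$, and since $\UR$ is closed under complementation, I may assume without loss of generality that $\varphi$ has property $E_1$.

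By Theorem~\ref{4suffices}, it is enough to exhibit, for each $F\subseteq\N$ with $|F|\le 4$, some finite $Y\supseteq F$ with $\varphi|Y\in\UR$. Given such an $F$, first enlarge it to a set $F'\supseteq F$ with $|F'|=4$ by adjoining arbitrary points from $\N\setminus F$ (possible because $\N$ is infinite). Now apply property $E_1$ twice in succession: choose $z_1\in\N\setminus F'$ with $\varphi(\{z_1,w\})=1$ for every $w\in F'$, and then $z_2\in\N\setminus(F'\cup\{z_1\})$ with $\varphi(\{z_2,w\})=1$ for every $w\in F'\cup\{z_1\}$. Set $Y=F'\cup\{z_1,z_2\}$. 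By construction, every pair in $[Y]^2$ meeting $\{z_1,z_2\}$ has $\varphi$-color $1$, so $\varphi|Y$ is exactly of the form described in Example~\ref{propertyE}, with $z_1,z_2$ playing the roles of $a,b$ and $F'$ playing the role of $F$. That example gives $\varphi|Y\in\UR$, and Theorem~\ref{4suffices} then delivers $\varphi\in\UR$.

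I do not foresee any serious obstacle: the entire argument amounts to the observation that property $E_1$ supplies, for any prescribed finite set, the two ``universal'' vertices needed to invoke Example~\ref{propertyE}. The only bookkeeping detail is the padding step when $|F|<4$, which is harmless because the reconstructibility of $\varphi|Y$ does not depend on how $F'$ was completed.
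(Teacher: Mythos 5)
Your proposal is correct and follows essentially the same route as the paper: apply Theorem~\ref{4suffices} and use property $E_i$ twice to produce the two universal vertices $a,b$ making the restriction isomorphic to the coloring of Example~\ref{propertyE}. The only differences are cosmetic — the paper works directly with color $i$ (Example~\ref{propertyE} handles either color, so your reduction to $E_1$ is unneeded though harmless) and takes $F$ of size exactly $4$, leaving the padding implicit.
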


\begin{proof} We will use Proposition \ref{4suffices}. Let $\varphi$ be a coloring with  the property  $E_i$. Let $F=\{x,y,z,w\}$ be a subset of $\N$. 
By the property $E_i$, there is $a\in\N\setminus \{x,y,z,w\}$ such that 
\[
\varphi(\{a,x\})=\varphi(\{a,y\})=\varphi(\{a,z\})=\varphi(\{a,w\})=i
\]
and there is $b\in\N\setminus \{x,y,z,w,a\}$ such that 
\[
\varphi(\{b,x\})=\varphi(\{b,y\})=\varphi(\{b,z\})=\varphi(\{b,w\})=\varphi(\{b,a\})=i.
\]
Let $X=F\cup\{a,b\}$. Now observe that $\langle X,\varphi|_X\rangle$ is isomorphic to the graph in Proposition \ref{propertyE}. Thus, $\varphi|_X\in \UR$ and we are done. 
\end{proof}

We will now see that any coloring with the property $E_i$ provides infinitely many reconstructible colorings obtained by making finite changes to the original one.

Let $\varphi\in 2^{[\N]^2}$ and $a\subset [\N]^2$ be a finite set. Let $\varphi_a\in 2^{[\N]^2}$ be defined by $\varphi_a^{-1}(1)=a\triangle \varphi^{-1}(1)$. In other words, $\varphi_a(\{x,y\})=\varphi(\{x,y\})$ if $\{x,y\}\notin a$; and $\varphi_a(\{x,y\})=1-\varphi(\{x,y\})$ if $\{x,y\}\in a$, for every $\{x,y\}\in[\N]^2$. Such colorings are the {\em finite changes} of $\varphi$.

\begin{prop}
\label{finitechangesEi} 
Let $\varphi$ be a coloring on $\N$ and  $a\subset [\N]^2$ a finite set. If $\varphi$ has the property $E_i$,  for $i\in\{0,1\}$, then $\varphi_a$ has the property $E_i$. 
\end{prop}

\begin{proof} Let us fix $i\in\{0,1\}$ and assume that  $\varphi$  has  property $E_i$. Let  $F\subset\N$ be a finite set, and consider $G=F\cup \{w: \{w,z\}\in a \;\text{for some $z\in \N$}\}$.  By the  property $E_i$ of $\varphi$, there is $z\in\N\setminus G$ such that  $\varphi(\{z,x\})=i$ for every $x\in G$. Given $x\in F$, we have $\{z,x\}\notin a$ since $z\not\in G$. Thus, $\varphi_a(\{z,x\})=\varphi(\{z,x\})=i$.\end{proof}

\begin{coro}\label{finitechanges} The finite changes of the following colorings are reconstructible:
\begin{enumerate}[label=(\roman*)]
    \item Constant colorings on $\N$. 
    
     \item The  random graph.
    
    \item The  Sierpi\'{n}ski's coloring.
    
    \item  $\varphi_R$, for $R\subseteq \N\times \N$ a linear ordering on $\N$ without maximal or minimal element.
\end{enumerate}
\end{coro}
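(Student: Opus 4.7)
The plan is straightforward: combine Lemma \ref{finitechangesEi} with Proposition \ref{EiUR}. It suffices to verify, in each of the four cases, that the base coloring has property $E_i$ for some $i\in\{0,1\}$; then Lemma \ref{finitechangesEi} transfers this property to every finite change, and Proposition \ref{EiUR} places every such coloring in $\UR$.

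For (i), if $\varphi\equiv i$ is constant, then any $z\in \N\setminus F$ witnesses property $E_i$ for any finite $F$, so $\varphi$ has property $E_i$. For (ii), the random graph satisfies the extension property recalled in the Preliminaries: given any finite $F\subset \N$, taking $B=\emptyset$ and $A=F$ yields $z\in\N$ with $\varphi(\{z,x\})=1$ for all $x\in F$, so $\varphi$ has property $E_1$ (and symmetrically $E_0$). For (iii), given a finite set $F\subset \N$ and the Sierpi\'{n}ski coloring $\varphi_e$ associated to an enumeration $e=\{r_n\}_n$ of $\Q$, choose $n\in \N$ larger than every element of $F$ and such that $r_n$ is larger than $r_m$ for every $m\in F$; this is possible since $\Q$ has no maximum. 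Then $\varphi_e(\{m,n\})=1$ for every $m\in F$, so $\varphi_e$ has property $E_1$. (Case (iii) is actually a particular instance of (iv).) For (iv), this is precisely what is established in Example \ref{general-Sierpinski}: if $(\N,R)$ has no maximum then $\varphi_R$ has property $E_1$, and if it has no minimum then $\varphi_R$ has property $E_0$; under the stated hypothesis at least one of these holds.

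Having verified that each base coloring has property $E_i$ for some $i\in\{0,1\}$, Lemma \ref{finitechangesEi} gives that every finite change $\varphi_a$ of such a coloring again has property $E_i$. Proposition \ref{EiUR} then yields $\varphi_a\in \UR$, which finishes all four items. No step presents an obstacle; the corollary is essentially a bookkeeping consequence of the two preceding results, and the only small observation is that the Sierpi\'{n}ski coloring (and more generally $\varphi_R$ for a linear order without maximum or minimum) genuinely satisfies $E_i$, which follows from the lack of a maximum or minimum in the underlying order.
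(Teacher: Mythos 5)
Your proposal is correct and follows exactly the route the paper intends: the corollary is stated as an immediate consequence of Lemma \ref{finitechangesEi} together with Proposition \ref{EiUR}, once one notes (as you do, via the extension property of the random graph, the lack of a maximum in $\Q$, and Example \ref{general-Sierpinski}) that each base coloring has property $E_i$ for some $i$.
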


\section{ Non-reconstructible colorings}\label{nonrecoverable}

In this section we analyze non-reconstructible colorings. We start by showing a condition that implies non reconstructibility and which is  used in almost all examples   presented. We also show that any coloring can be extended to a non- reconstructible one (Proposition \ref{extentioninnotR}).

\begin{defi} For a coloring $\varphi$ on $X$ and $x, y\in X$,  we say that an edge $\{x,y\}$ is {\em critical for $\varphi$}, if 
$\varphi(\{x,z\})=1-\varphi(\{y,z\})$, for all $z\in X\setminus\{x,y\}$.
\end{defi}

The following simple observation gives a very useful criterion to show non reconstructibility.

\begin{prop}
\label{criterio-no-UR}
Let $\varphi$ be a coloring on $X$ with $|X|\geq 3$.  If $\varphi$ has a critical pair, then $\varphi$ is non-reconstructible.
\end{prop}

\proof Let $\{x,y\}$ be a critical pair for $\varphi$. Define $\psi:[X]^2\longrightarrow 2$ by $\psi(\{w,z\})=\varphi(\{w,z\})$ if $\{w,z\}\neq\{x,y\}$ and $\psi(\{x,y\})=1-\varphi(\{x,y\})$.  Notice that $\{x,y\}\not\subseteq H$ for any $H\in \h(\varphi)\cup \h(\psi)$. Therefore $\h(\varphi)=\h(\psi)$ and $\psi$ witnesses that  $\varphi\not\in\UR$.
\endproof

The condition of being a critical pair is  stronger than just requiring that the pair is not contained in a homogeneous set. For instance, let $\varphi$ be  a constant coloring over $\N$ and consider the finite change $\varphi_a$  of $\varphi$ with $a=\{0,1\}$. Then $\varphi_a$ is reconstructible and $\{0,1\}$ is not contained in any $\varphi_a$-homogeneous sets. 

Now we give our first example of a non-reconstructible coloring, which is  the prototype of such colorings. 

\begin{ex}\label{nonrecovered}
Consider a partition of $\N$ into two infinite sets, for instance,  let $A_0$ be the set of even numbers and $A_1$ be the set of  odd numbers. Let $\varphi$ be the coloring associated to this partition, i.e.,  $\varphi(\{x,y\})=1$ if and only if $\{x,y\}\subseteq A_i$ for some $i$. Then, the pair $\{0,1\}$ is critical for $\varphi$, thus by Proposition \ref{criterio-no-UR}, $\varphi\in\neg\UR$.

Moreover, given any nonempty set $B\subseteq \N$ consider the coloring $\varphi_B:[\N]^2\longrightarrow 2$ given by 
$\varphi_B(\{x,y\})=\varphi(\{x,y\})$ if $\{x,y\}\neq\{2n, 2n+1\}$ for any $n\in B$; and $\varphi_B(\{2n, 2n+1\})=1$ for all $n\in B$. Then, $\varphi$ and $\varphi_B$ have the same homogeneous sets. 

Below we depict the  coloring $\varphi$ (see Figure \ref{fignonrec}) and a non-trivial reconstruction $\psi$ (see Figure \ref{fignonrec2}) of it for the case of a partition of the set $\{0,1,2,3,4,5\}$. 

\bigskip

\begin{minipage}{.5\textwidth}
\centering
\begin{tikzpicture}[scale=1.6]
\draw[thick] (0,0) -- (0,1);
\draw[thick] (0,1) -- (0,2);
\draw[thick] (1,0) -- (1,1);
\draw[thick] (1,1) -- (1,2);
\draw[thick] (0,0) .. controls (-0.5,1.5) and (-0.5,0.5) .. (0,2);
\draw[thick] (1,0) .. controls (1.5,1.5) and (1.5,0.5) .. (1,2);
\draw[gray] (0,0) -- (1,0);
\draw[gray] (0,0) -- (1,1);
\draw[gray] (0,0) -- (1,2);
\draw[gray] (0,1) -- (1,0);
\draw[gray] (0,1) -- (1,1);
\draw[gray] (0,1) -- (1,2);
\draw[gray] (0,2) -- (1,0);
\draw[gray] (0,2) -- (1,1);
\draw[gray] (0,2) -- (1,2);

\filldraw[black] (0,0) circle (1pt) node[anchor=east] {\scriptsize $0$};
\filldraw[black] (0,1) circle (1pt) node[anchor=east] {\scriptsize $2$};
\filldraw[black] (0,2) circle (1pt) node[anchor=east] {\scriptsize $4$};

\filldraw[black] (1,0) circle (1pt) node[anchor=west] {\scriptsize $1$};
\filldraw[black] (1,1) circle (1pt) node[anchor=west] {\scriptsize $3$};
\filldraw[black] (1,2) circle (1pt) node[anchor=west] {\scriptsize $5$};

\end{tikzpicture}
\captionof{figure}{Coloring $\varphi$}
\label{fignonrec}

\end{minipage}
\begin{minipage}{.5\textwidth}
\centering
\begin{tikzpicture}[scale=1.6]
\draw[thick] (0,0) -- (0,1);
\draw[thick] (0,1) -- (0,2);
\draw[thick] (1,0) -- (1,1);
\draw[thick] (1,1) -- (1,2);
\draw[thick] (0,0) -- (1,0);
\draw[thick] (0,0) .. controls (-0.5,1.5) and (-0.5,0.5) .. (0,2);
\draw[thick] (1,0) .. controls (1.5,1.5) and (1.5,0.5) .. (1,2);
\draw[gray] (0,0) -- (1,1);
\draw[gray] (0,0) -- (1,2);
\draw[gray] (0,1) -- (1,0);
\draw[gray] (0,1) -- (1,1);
\draw[gray] (0,1) -- (1,2);
\draw[gray] (0,2) -- (1,0);
\draw[gray] (0,2) -- (1,1);
\draw[gray] (0,2) -- (1,2);

\filldraw[black] (0,0) circle (1pt) node[anchor=east] {\scriptsize $0$};
\filldraw[black] (0,1) circle (1pt) node[anchor=east] {\scriptsize $2$};
\filldraw[black] (0,2) circle (1pt) node[anchor=east] {\scriptsize $4$};

\filldraw[black] (1,0) circle (1pt) node[anchor=west] {\scriptsize $1$};
\filldraw[black] (1,1) circle (1pt) node[anchor=west] {\scriptsize $3$};
\filldraw[black] (1,2) circle (1pt) node[anchor=west] {\scriptsize $5$};

\end{tikzpicture}
\captionof{figure}{Coloring $\psi$}
\label{fignonrec2}

\end{minipage}
\end{ex}

\bigskip

\bigskip

We  present below a characterization of colorings admitting a critical pair. For that purpose,  we introduce a function on colorings. Let $X$ be a set with $3\leq |X|\leq \aleph_0$. For each coloring $\varphi\not\in\UR$ on $X$, let 

\[
r(\varphi)=\min\{ |\{\{x,y\}\in [X]^2: \varphi(\{x,y\})\neq \psi(\{x,y\})\}|:\psi\in 2^{[X]^2},\; \h(\psi)=\h(\varphi), \psi\neq\varphi, \psi\neq 1-\varphi\}.
\]
\medskip

For convenience, let $r(\varphi)=0$ if $\varphi\in\UR$. 
Notice $0\leq r(\varphi)\leq \aleph_0$.  

\begin{lema}
\label{rnoes2}
Let $\varphi$ be a coloring on a countable set with at least 3 elements.
\begin{itemize}
\item[(i)] If $\varphi$ has a critical pair, then $r(\varphi)=1$.
\item[(ii)] $r(\varphi)\neq 2$ for every $\varphi$. 
\end{itemize}
\end{lema}

\begin{proof} 

(i) By Proposition \ref{criterio-no-UR}, $\varphi\not\in \UR$ and the result follows from the proof of Proposition \ref{criterio-no-UR}.

(ii) If $\varphi\in \UR$, then $r(\varphi)=0$.  Let $\varphi\in\neg \UR$, and suppose $r(\varphi)=2$ to get a contradiction. Let $\psi$ be a reconstruction of 
$\varphi$ such that  
\begin{equation}
\label{d2}
|\{\{x,y\}: \varphi(\{x,y\})\neq \psi(\{x,y\})\}|=2.
\end{equation}
Let $\{x,y\}$ be such that $\varphi(\{x,y\})\neq \psi(\{x,y\})$. By (i), $\varphi$ does not have a critical pair. Since $\{x, y\}$ is not critical for $\varphi$, there is $z\not\in\{x,y\}$ such that $\varphi(\{x,z\})=\varphi(\{y,z\})$.  We claim that  $\{x,y,z\}\not\in \h(\varphi)$. Suppose not and let $i$ be its $\varphi$-color. Then,  $\{x,y,z\}$ would be a $\psi$-homogeneous set of color $1-i$, which contradicts  (\ref{d2}). Thus 
\begin{equation}
\label{uno}
\varphi(\{y,z\})=\varphi(\{x,z\})=1-\varphi(\{x,y\})=\psi(\{x,y\}).
\end{equation}
Since $\{x,y,z\}$ is not $\psi$-homogeneous,  we assume, without lost of generality,  that
\begin{equation}
\psi(\{x,z\})=1-\psi(\{x,y\}).
\end{equation}
Notice that $\varphi(\{x,z\})\neq \psi(\{x,z\})$ and, by (\ref{d2}), $\varphi$ and $\psi$ agree on any pair different from $\{x,z\}$ and $\{x,y\}$.  Thus 
\begin{equation}
\psi(\{y,z\})=\varphi(\{y,z\}). 
\end{equation}
 Since $\{x,z\}$ is not critical for $\varphi$, there is $w\not\in\{x,z\}$ such that 
 $\varphi(\{x,w\})=\varphi(\{z,w\})$. From (\ref{uno}), $w\neq y$.  By (\ref{d2}), $\varphi(\{x,w\})=\psi(\{x,w\})=\psi(\{z,w\})$. It is easy to verify that $\{x,w,z\}\in\h(\varphi)\triangle\h(\psi)$, a contradiction. 
\end{proof}

\begin{teo}
\label{criterio-no-UR2}
Let $\varphi$ be a  coloring on  a set $X$ with $|X|\geq 3$. The following are 
equivalent.

\begin{itemize}
\item[(i)]  There is a critical pair for $\varphi$. 

\item[(ii)]  $r(\varphi)=1$.

\item[(iii)] There is  a coloring $\psi$ and  $x\in X$ such that $\h(\varphi)=\h(\psi)$, $\varphi\neq\psi$ and $\varphi|_{X\setminus\{x\}}=\psi |_{X\setminus\{x\}}$.

\end{itemize}
\end{teo}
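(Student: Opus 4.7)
The direction $(ii) \Rightarrow (i)$ is immediate from Proposition \ref{criterio-no-UR}: a critical pair $\{x,y\}$ produces a $\psi$ differing from $\varphi$ only on $\{x,y\}$ with $\h(\varphi)=\h(\psi)$, and the equality $\varphi|_{X\setminus\{x\}}=\psi|_{X\setminus\{x\}}$ is automatic since the single disagreement pair contains $x$.

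For $(i)\Rightarrow (ii)$, let $\psi$ be the coloring provided by the hypothesis and set $D=\{y\in X\setminus\{x\}:\varphi(\{x,y\})\neq\psi(\{x,y\})\}$. Since $\varphi\neq\psi$ and the two colorings agree off pairs containing $x$, the set $D$ is non-empty and the disagreement pairs are exactly $\{\{x,y\}:y\in D\}$. Before bounding $|D|$, I would rule out $\psi=1-\varphi$: as $|X|\geq 3$, the set $[X\setminus\{x\}]^2$ is non-empty, so $\varphi$ and $\psi$ share a value somewhere and therefore cannot be complements. Thus $\psi$ is a genuine non-trivial reconstruction of $\varphi$, and $\varphi\notin\UR$.

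The heart of the argument is the bound $|D|\leq 2$. I would take $y,z\in D$ with $\varphi(\{x,y\})=\varphi(\{x,z\})=c$ and inspect the triple $\{x,y,z\}$: its $\varphi$-colors are $c,\,c,\,\varphi(\{y,z\})$ and its $\psi$-colors are $1-c,\,1-c,\,\varphi(\{y,z\})$, since $\{y,z\}$ lies outside the disagreement set. The triple is $\varphi$-homogeneous iff $\varphi(\{y,z\})=c$ and $\psi$-homogeneous iff $\varphi(\{y,z\})=1-c$; exactly one of these alternatives holds, contradicting $\h(\varphi)=\h(\psi)$. Hence the map $y\mapsto\varphi(\{x,y\})$ is injective on $D$, forcing $|D|\leq 2$.

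To conclude, $r(\varphi)\leq |D|\leq 2$ and $r(\varphi)\geq 1$ since $\varphi\notin\UR$. Proposition \ref{rnoes2} excludes the value $r(\varphi)=2$, leaving $r(\varphi)=1$, and Proposition \ref{criterio-no-UR} then yields a critical pair for $\varphi$. I expect the triple argument bounding $|D|\leq 2$ to be the main obstacle; once it is in hand, the previous results of the section close the argument with no further case analysis.
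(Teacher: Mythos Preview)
Your proof is correct and rests on the same ingredients as the paper's: Proposition~\ref{criterio-no-UR}, Proposition~\ref{rnoes2}, and the observation that two disagreements $y,z\in D$ with $\varphi(\{x,y\})=\varphi(\{x,z\})$ force $\{x,y,z\}\in\h(\varphi)\triangle\h(\psi)$.

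The organization differs slightly. You prove the injectivity of $y\mapsto\varphi(\{x,y\})$ on $D$ directly, obtain $r(\varphi)\leq|D|\leq 2$, and then invoke $r(\varphi)\neq 2$ to conclude $r(\varphi)=1$. The paper instead argues by contradiction: assuming no critical pair, it uses non-criticality of $\{x,y\}$ to manufacture a specific second disagreement $z$ with $\varphi(\{x,z\})\neq\varphi(\{x,y\})$, then uses $r(\varphi)\geq 3$ to obtain a third disagreement $w$, and finally applies your triple argument (as a two-case analysis depending on whether $\varphi(\{x,w\})$ matches $\varphi(\{x,y\})$ or $\varphi(\{x,z\})$). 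Your route is shorter and avoids the detour through non-criticality; the paper's route makes the role of the hypothesis ``no critical pair'' more visible at each step. Both are valid and ultimately identical at the level of the key triple computation.
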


\begin{proof}   
$(i) \Rightarrow (ii)$.  By Lemma \ref{rnoes2}.

$(ii) \Rightarrow (iii)$.  Obvious.

$(iii) \Rightarrow (i)$. 
Let  $\psi$ and $x$ be as in the hypothesis of (iii). Towards a contradiction, suppose there are no critical pairs for $\varphi$. Let  $y\in X\setminus\{x\}$ be such that $\varphi(\{x,y\})\neq\psi(\{x,y\})$. Since $\{x,y\}$ is not critical for $\varphi$, there is $z\not\in \{x,y\}$ such that $\varphi(\{x,z\})=\varphi(\{y,z\})$.
Since  $\varphi|_{X\setminus\{x\}}=\psi |_{X\setminus\{x\}}$,  $\psi(\{y,z\})=\varphi(\{y,z\})$. Hence, $\{x,y,z\}\not\in\h(\varphi)=\h(\psi)$, otherwise $\psi(\{x,y\})=\psi(\{y,z\})=\varphi(\{y,z\})=\varphi(\{x,y\})$, a contradiction.  Then, 
$\varphi(\{x,z\})\neq\psi(\{x,z\})$. By Lemma \ref{rnoes2}, $r(\varphi)\geq 3$, thus there is $\{u,w\}\in [X]^2$ with $\{u,w\}$ different from $\{x,y\}, \{x,z\}$ such that $\varphi(\{u,w\})\neq\psi(\{u,w\})$. 
As $\varphi|_{X\setminus\{x\}}=\psi |_{X\setminus\{x\}}$, we assume that $u=x$, i.e. $\varphi(\{x,w\})\neq\psi(\{x,w\})$. There are two cases to be considered:
(a) Suppose $\varphi(\{x,w\})= \varphi(\{x,y\})$. Then,   $\{x,y,w\}\in \h(\varphi)\triangle\h(\psi)$. 
(b) Suppose $\varphi(\{x,w\})=1- \varphi(\{x,y\})$. Then,   $\{x,w,z\}\in \h(\varphi)\triangle\h(\psi)$.
In both cases we get a  contradiction with $\h(\varphi)=\h(\psi)$. 
\end{proof}

We know very little about the function $r$.

\begin{question}
 Is there a coloring $\varphi$ such that $r(\varphi)=\aleph_0$?
\end{question}

There are non-reconstructible colorings without a critical pair, as we show next. However, we do not know a method to construct colorings in $\neg\UR$ without critical pairs.

\begin{ex}
\label{sin-pareja-critica}
The colorings $\varphi$ and $\varphi'$ depicted below (Figures \ref{fignoncritical1} and \ref{fignoncritical2}) are non-reconstructible and  do not have a critical pair. Colorings $\psi$ and $\psi'$ (Figures \ref{fignoncritical1b} and \ref{fignoncritical2b}) are, respectively, a non-trivial  reconstruction of $\varphi$ and $\varphi'$. 
\medskip

\begin{minipage}{.5\textwidth}
\centering
\begin{tikzpicture}[scale=1.2]
\draw[thick] (0,0) -- (0,1);
\draw[thick] (2,0) -- (2,1);
\draw[thick] (0,0) -- (2,0);
\draw[thick] (0,1) -- (1,2);
\draw[thick] (1,2) -- (2,1);

\draw[mygray,thick] (0,0) -- (1,2);
\draw[mygray,thick] (0,0) -- (2,1);
\draw[mygray,thick] (0,1) -- (2,1);
\draw[mygray,thick] (0,1) -- (2,0);
\draw[mygray,thick] (1,2) -- (2,0);

\filldraw[black] (0,0) circle (1pt) node[anchor=east] {\scriptsize $1$};
\filldraw[black] (0,1) circle (1pt) node[anchor=east] {\scriptsize $2$};
\filldraw[black] (1,2) circle (1pt) node[anchor=south] {\scriptsize $3$};
\filldraw[black] (2,1) circle (1pt) node[anchor=west] {\scriptsize $4$};
\filldraw[black] (2,0) circle (1pt) node[anchor=west] {\scriptsize $5$};
\end{tikzpicture}
\captionof{figure}{$\varphi$}
\label{fignoncritical1}

\end{minipage}
\begin{minipage}{.5\textwidth}
\centering
\begin{tikzpicture}[scale=1.2]

\draw[mygray,thick] (0,0) -- (0,1);
\draw[mygray,thick] (2,0) -- (2,1);
\draw[thick] (0,0) -- (2,0);
\draw[thick] (0,1) -- (1,2);
\draw[mygray,thick] (1,2) -- (2,1);

\draw[mygray,thick] (0,0) -- (1,2);
\draw[thick] (0,0) -- (2,1);
\draw[thick] (0,1) -- (2,1);
\draw[mygray,thick] (0,1) -- (2,0);
\draw[thick] (1,2) -- (2,0);

\filldraw[black] (0,0) circle (1pt) node[anchor=east] {\scriptsize $1$};
\filldraw[black] (0,1) circle (1pt) node[anchor=east] {\scriptsize $2$};
\filldraw[black] (1,2) circle (1pt) node[anchor=south] {\scriptsize $3$};
\filldraw[black] (2,1) circle (1pt) node[anchor=west] {\scriptsize $4$};
\filldraw[black] (2,0) circle (1pt) node[anchor=west] {\scriptsize $5$};
\end{tikzpicture}
\captionof{figure}{$\psi'$}
\label{fignoncritical1b}
\end{minipage}

\bigskip

\begin{minipage}{.5\textwidth}
\centering
\begin{tikzpicture}[scale=1.2]
\draw[thick] (0,0) -- (0,1);
\draw[thick] (2,0) -- (2,1);
\draw[thick] (0,0) -- (2,0);
\draw[mygray,thick] (0,1) -- (1,2);
\draw[thick] (1,2) -- (2,1);

\draw[mygray,thick] (0,0) -- (1,2);
\draw[mygray,thick] (0,0) -- (2,1);
\draw[thick] (0,1) -- (2,1);
\draw[mygray,thick] (0,1) -- (2,0);
\draw[mygray,thick] (1,2) -- (2,0);
\draw[thick] (0,0) -- (1,-1);
\draw[thick] (1,2) -- (1,-1);
\draw[mygray,thick] (0,1) -- (1,-1);
\draw[mygray,thick] (2,1) -- (1,-1);
\draw[mygray,thick] (2,0) -- (1,-1);

\filldraw[black] (0,0) circle (1pt) node[anchor=east] {\scriptsize $1$};
\filldraw[black] (0,1) circle (1pt) node[anchor=east] {\scriptsize $2$};
\filldraw[black] (1,2) circle (1pt) node[anchor=south] {\scriptsize $3$};
\filldraw[black] (2,1) circle (1pt) node[anchor=west] {\scriptsize $4$};
\filldraw[black] (2,0) circle (1pt) node[anchor=west] {\scriptsize $5$};
\filldraw[black] (1,-1) circle (1pt) node[anchor=north] {\scriptsize $6$};

\end{tikzpicture}
\captionof{figure}{$\varphi'$}
\label{fignoncritical2}

\end{minipage}
\begin{minipage}{.5\textwidth}
\centering
\begin{tikzpicture}[scale=1.2]
\draw[thick] (0,0) -- (0,1);
\draw[thick] (2,0) -- (2,1);
\draw[thick] (0,0) -- (2,0);
\draw[mygray,thick] (0,1) -- (1,2);
\draw[mygray,thick] (1,2) -- (2,1);

\draw[thick] (0,0) -- (1,2);
\draw[mygray,thick] (0,0) -- (2,1);
\draw[thick] (0,1) -- (2,1);
\draw[mygray,thick] (0,1) -- (2,0);
\draw[mygray,thick] (1,2) -- (2,0);
\draw[mygray,thick] (0,0) -- (1,-1);
\draw[thick] (1,2) -- (1,-1);
\draw[mygray,thick] (0,1) -- (1,-1);
\draw[thick] (2,1) -- (1,-1);
\draw[mygray,thick] (2,0) -- (1,-1);

\filldraw[black] (0,0) circle (1pt) node[anchor=east] {\scriptsize $1$};
\filldraw[black] (0,1) circle (1pt) node[anchor=east] {\scriptsize $2$};
\filldraw[black] (1,2) circle (1pt) node[anchor=south] {\scriptsize $3$};
\filldraw[black] (2,1) circle (1pt) node[anchor=west] {\scriptsize $4$};
\filldraw[black] (2,0) circle (1pt) node[anchor=west] {\scriptsize $5$};
\filldraw[black] (1,-1) circle (1pt) node[anchor=north] {\scriptsize $6$};

\end{tikzpicture}
\captionof{figure}{$\psi'$}
\label{fignoncritical2b}

\end{minipage}

\end{ex}

\bigskip

We have seen  that the finite changes of some reconstructible colorings remain reconstructible (see Proposition \ref{finitechangesEi}). The  following generalization of  Example \ref{nonrecovered}  shows an analogous fact for some non-reconstructible colorings.

\begin{prop}\label{Two_parts_unrecoverable} Let $\varphi$ be the coloring associated to a partition of $\N$ into two parts. Then,

\begin{enumerate}
\item[(i)] $\varphi_a\in\neg \UR$, for every finite set $a\subset[\N]^2$.

\item[(ii)] For every nonempty set $I\subseteq\N$, there is $\varphi_I\in\neg \UR\setminus\{\varphi,1-\varphi\}$. 
\end{enumerate}
\end{prop}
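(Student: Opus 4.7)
The plan is to base both parts on the observation that if $x\in A_0$ and $y\in A_1$, then $\{x,y\}$ is a critical pair for $\varphi$, since every $z\neq x,y$ lies in exactly one of the two parts and therefore $\varphi(\{x,z\})=1-\varphi(\{y,z\})$. I will then apply Proposition \ref{criterio-no-UR} to detect non-reconstructibility and Proposition \ref{triangulos} to verify that certain modifications preserve the collection of homogeneous sets. Throughout I assume both $A_0$ and $A_1$ are infinite, as in Example \ref{nonrecovered}.

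\medskip

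For (i), given a finite $a\subset[\N]^2$, let $S$ be the (finite) set of endpoints appearing in pairs of $a$. Since $A_0$ and $A_1$ are infinite, I can pick $x\in A_0\setminus S$ and $y\in A_1\setminus S$. The key point is that, for every $z\in\N\setminus\{x,y\}$, both $\{x,z\}$ and $\{y,z\}$ avoid $a$, so $\varphi_a$ agrees with $\varphi$ on these pairs. Consequently the criticality of $\{x,y\}$ for $\varphi$ transfers verbatim to $\varphi_a$, and Proposition \ref{criterio-no-UR} yields $\varphi_a\in\neg\UR$.

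\medskip

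For (ii), enumerate $A_0=\{a_0,a_1,\ldots\}$ and $A_1=\{b_0,b_1,\ldots\}$, and for a nonempty $I\subseteq\N$ define $\varphi_I$ to agree with $\varphi$ everywhere except on the pairs $\{a_n,b_n\}$ with $n\in I$, where I set the color to $1$. I need to verify that $\h(\varphi_I)=\h(\varphi)$, that $\varphi_I\neq\varphi$, and that $\varphi_I\neq 1-\varphi$; once these are established, $\varphi$ itself is a non-trivial reconstruction of $\varphi_I$, yielding $\varphi_I\in\neg\UR\setminus\{\varphi,1-\varphi\}$. The last two inequalities are easy: $I\neq\emptyset$ gives $\varphi_I\neq\varphi$, and the agreement of $\varphi_I$ with $\varphi$ on pairs inside $A_0$ (where $\varphi=1\neq 0=1-\varphi$) gives $\varphi_I\neq 1-\varphi$.

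\medskip

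The only nontrivial step is $\h(\varphi_I)=\h(\varphi)$, which by Proposition \ref{triangulos} reduces to triangles. Any triangle in $[\N]^3$ contains at most one modified pair, since two pairs $\{a_n,b_n\}$ and $\{a_m,b_m\}$ with $n\neq m$ already span four distinct points. If a triangle $T$ contains no modified pair, then $\varphi_I$ and $\varphi$ agree on $T$. If $T=\{a_n,b_n,w\}$ with $n\in I$, then the edges $\{a_n,w\}$ and $\{b_n,w\}$ are both unmodified (since $w\neq a_n,b_n$) and, by the criticality of $\{a_n,b_n\}$ for $\varphi$, receive opposite colors under both colorings, so $T$ is non-homogeneous under each. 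Hence $\h(\varphi_I)\cap[\N]^3=\h(\varphi)\cap[\N]^3$, and Proposition \ref{triangulos} closes the argument. The main obstacle is exactly this verification, but it becomes clean because every cross-part pair is critical, forcing each triangle that contains a modified pair to have two disagreeing unmodified edges.
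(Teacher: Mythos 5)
Your proof is correct. Part (i) is essentially the paper's argument: the paper picks $p\in A/m$ and $q\in B/m$ with $m=\max\bigcup a$, you pick $x,y$ outside the support of $a$; either way the cross-part pair remains critical for $\varphi_a$ and Proposition \ref{criterio-no-UR} applies. In part (ii) you build the same coloring $\varphi_I$ as the paper but certify it differently: the paper simply observes that each $\{a_n,b_n\}$ with $n\in I$ is still critical for $\varphi_I$ and again invokes Proposition \ref{criterio-no-UR}, which is shorter, whereas you verify $\h(\varphi_I)=\h(\varphi)$ triangle by triangle via Proposition \ref{triangulos} and then use $\varphi$ itself as a non-trivial reconstruction of $\varphi_I$. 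Your route costs a little more work but buys the stronger conclusion (implicit in Example \ref{nonrecovered} but not actually established by the paper's proof of (ii)) that $\varphi_I$ and $\varphi$ have exactly the same homogeneous sets, i.e.\ that $\varphi_I$ is a genuine non-trivial reconstruction of $\varphi$ rather than merely some unreconstructible coloring. One small remark: you explicitly assume both parts are infinite; the paper's proof makes the same tacit assumption (it needs $A/m$ and $B/m$ nonempty in (i) and enumerations of both parts by $\N$ in (ii)), so this is not a defect of your argument relative to the paper's, but it is worth flagging that the general case of a partition with one finite part is not literally covered by either proof.
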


\begin{proof} 
Let $\N=A\cup B$ be a partition of $\N$, and $\varphi:[\N]^2\longrightarrow 2$ be the coloring associated to the partition.

\begin{itemize}

\item[(i)] Consider $a\subset[\N]^2$ a nonempty finite set. Let $m=\max(\bigcup a)$, $p\in A$ with $p>m$ and $q\in B$ with $q>m$. Notice that $\{p,z\},\{q,z\}\notin a$ for every $z\in\N\setminus\{p,q\}$. Thus, $\varphi_a(\{p,z\})=\varphi(\{p,z\})$ and $\varphi_a(\{q,z\})=\varphi(\{q,z\})$ for every $z\in\N\setminus\{p,q\}$. Then, $\{p,q\}$ is critical for $\varphi_a$, thus by Proposition \ref{criterio-no-UR}, $\varphi_a\in\neg\UR$.

\item[(ii)] Let $A=\{a_i:i\in\N\}$ and $B=\{b_i:i\in\N\}$ be enumerations of $A$ and $B$, and consider $\emptyset\neq I\subseteq\N$. Define $\varphi_I:[\N]^2\longrightarrow 2$ by $\varphi_I(\{x,y\})=\varphi(\{x,y\})$ if $\{x,y\}\neq\{a_n,b_n\}$ for any $n\in I$; and $\varphi_I(\{a_n,b_n\})=1$ for every $n\in I$. Then, for  $n\in I$,  $\{a_n,b_{n}\}$ is critical for $\varphi_I$ and we are done by Proposition \ref{criterio-no-UR}.
\end{itemize}
\end{proof}

We have seen in Proposition \ref{extensionUR} that any  coloring can be extended to a coloring belonging to $\UR$. Our next result shows that it can also be extended to a coloring in $\neg\UR$. 

\begin{prop}\label{extentioninnotR}
Let $\varphi$ be a coloring on $X$ and $a\not\in X$. There is a coloring $\psi$ on $X\cup\{a\}$ such that $\varphi\subset\psi$ and $\psi\in\neg \UR$. 

\end{prop}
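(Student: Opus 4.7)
The plan is to exploit the sufficient condition for unreconstructibility given by Proposition \ref{criterio-no-UR}: if the extension $\psi$ admits a critical pair, then automatically $\psi\in\neg\UR$. Since $a$ is a fresh point, the values $\psi(\{a,z\})$ for $z\in X$ are completely under our control, so the natural strategy is to choose them so that $a$ forms a critical pair with some fixed element $b\in X$.

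Concretely, fix any $b\in X$ and define $\psi$ on $X\cup\{a\}$ by setting $\psi|X:=\varphi$, $\psi(\{a,b\}):=0$ (the value is irrelevant), and $\psi(\{a,z\}):=1-\varphi(\{b,z\})$ for every $z\in X\setminus\{b\}$. The first clause guarantees $\varphi\subset\psi$. For the critical pair condition, observe that for any $z\in(X\cup\{a\})\setminus\{a,b\}=X\setminus\{b\}$ one has
\[
\psi(\{a,z\})=1-\varphi(\{b,z\})=1-\psi(\{b,z\}),
\]
so $\{a,b\}$ is critical for $\psi$. Proposition \ref{criterio-no-UR} then yields $\psi\in\neg\UR$, completing the proof.

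There is no real obstacle here, as the construction is direct and Proposition \ref{criterio-no-UR} does all the work. The only point that needs attention is the mild hypothesis $|X|\geq 2$, needed so that $|X\cup\{a\}|\geq 3$ and Proposition \ref{criterio-no-UR} applies; the degenerate cases $|X|\leq 1$ are vacuous since then $X\cup\{a\}$ has at most two elements and $\neg\UR=\emptyset$, so the statement is implicitly restricted to that range.
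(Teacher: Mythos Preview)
Your proof is correct and follows essentially the same approach as the paper: fix a point $b\in X$ (the paper calls it $x_0$), define $\psi$ so that $\{a,b\}$ is critical by setting $\psi(\{a,z\})=1-\varphi(\{b,z\})$ for $z\in X\setminus\{b\}$, and invoke Proposition~\ref{criterio-no-UR}. The only cosmetic difference is that the paper chooses $\psi(\{a,b\})=1$ rather than $0$, which, as you note, is irrelevant; your remark on the implicit hypothesis $|X|\geq 2$ is a useful clarification not made explicit in the paper.
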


\begin{proof}
Fix $x_0\in X$ and $a\notin X$. Let $\psi$ be a coloring on $X$ defined by $\psi(\{a,x_0\})=1$, $\psi(\{a,x\})=1-\varphi(\{x_0,x\})=0$ for $x\in X\setminus\{x_0\}$, and  $\psi|_X=\varphi$. Then, $\{a, x_0\}$ is critical for $\psi$. Hence, $\psi\in\neg\UR$ by Proposition \ref{criterio-no-UR}. 
\end{proof}

\section{Colorings associated to partitions of $\N$ into more than two parts}

In this section we will show that, in contrast with Proposition \ref{Two_parts_unrecoverable}, the coloring associated to any partition of $\N$ into at least three parts belongs to $\UR$ (Theorem \ref{colorpartition}). Furthermore, we will provide conditions on the partition so that the finite changes of the corresponding coloring are also in $\UR$ (Proposition \ref{infinitepartition} and Proposition \ref{3infiniteparts}). 

\begin{teo}
\label{colorpartition} Let $(A_i)_{i\in I}$ be a partition of $\N$ with $|I|\geq 3$. Then, the coloring associated to the partition belongs to $\UR$. 
\end{teo}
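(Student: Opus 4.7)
The plan is to give a direct argument rather than route through Theorem \ref{4suffices}. Let $\psi$ be any coloring on $\N$ with $\h(\psi)=\h(\varphi)$; the goal is to show $\psi\in\{\varphi,1-\varphi\}$. Since $\alpha\geq 3$, fix representatives $a_0\in A_{\beta_0}$, $a_1\in A_{\beta_1}$, $a_2\in A_{\beta_2}$ from three distinct parts. The triple $\{a_0,a_1,a_2\}$ is $0$-homogeneous for $\varphi$, hence homogeneous for $\psi$; after replacing $\psi$ by $1-\psi$ if necessary, I may assume $\psi(\{a_i,a_j\})=0$ for all $i\neq j$ in $\{0,1,2\}$. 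It then suffices to prove $\psi=\varphi$.

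The first step is to pin down $\psi(\{x,a_i\})$ for every $x\in\N\setminus\{a_0,a_1,a_2\}$ and every $i\in\{0,1,2\}$. If $x\notin A_{\beta_i}$, then since $x$ lies in at most one of the three selected parts, there is some $j\neq i$ with $x\notin A_{\beta_j}$ as well; the triple $\{x,a_i,a_j\}$ is then $0$-homogeneous for $\varphi$, hence homogeneous for $\psi$, and together with $\psi(\{a_i,a_j\})=0$ this yields $\psi(\{x,a_i\})=0=\varphi(\{x,a_i\})$. If instead $x\in A_{\beta_i}$ (which can happen for at most one $i$), pick any $j\neq i$; the triple $\{x,a_i,a_j\}$ is not $\varphi$-homogeneous, hence not $\psi$-homogeneous, and since the previous case gives $\psi(\{x,a_j\})=\psi(\{a_i,a_j\})=0$, the only remaining possibility is $\psi(\{x,a_i\})=1=\varphi(\{x,a_i\})$.

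For the final step, let $x,y\in\N\setminus\{a_0,a_1,a_2\}$ with $x\neq y$. Each of $x,y$ belongs to at most one of the parts $A_{\beta_0},A_{\beta_1},A_{\beta_2}$, so a pigeonhole count produces $i\in\{0,1,2\}$ for which either both or neither of $x,y$ lies in $A_{\beta_i}$; by the previous paragraph this means $\psi(\{x,a_i\})=\psi(\{y,a_i\})$. A short case analysis on whether $x$ and $y$ share a part of the partition determines whether the triple $\{x,y,a_i\}$ is $\varphi$-homogeneous, and since $\h(\psi)=\h(\varphi)$ the same must hold for $\psi$; knowing two of the three $\psi$-values on that triple then forces $\psi(\{x,y\})=\varphi(\{x,y\})$. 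The step I expect to require the most care is precisely this final bookkeeping, but by Proposition \ref{triangulos} the whole reconstruction reduces to controlling $\varphi$ on $3$-element sets, and the triples $\{x,y,a_i\}$ furnish exactly the right probes to do so.
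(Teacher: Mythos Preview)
Your argument is correct. Fixing three representatives $a_0,a_1,a_2$ from distinct parts and bootstrapping from the normalized value $\psi(\{a_i,a_j\})=0$ does everything you claim: Step~1 is clean, the pigeonhole in Step~2 works because each of $x,y$ lies in at most one of $A_{\beta_0},A_{\beta_1},A_{\beta_2}$ so at most two indices can have exactly one of $x,y$, and the final deduction of $\psi(\{x,y\})$ from the homogeneity status of $\{x,y,a_i\}$ together with the two known values $\psi(\{x,a_i\})=\psi(\{y,a_i\})$ is valid in all sub-cases. One cosmetic point: your sentence ``a short case analysis on whether $x$ and $y$ share a part \ldots\ determines whether the triple is $\varphi$-homogeneous'' slightly undersells the case split, since when $x,y$ share a part you also need to note whether that part is $A_{\beta_i}$; but the conclusion $\psi(\{x,y\})=\varphi(\{x,y\})$ follows uniformly once you observe that $\varphi$ and $\psi$ agree on the other two edges of the triple and on its homogeneity status.

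This is a genuinely different route from the paper. The paper invokes Theorem~\ref{4suffices}: for each $4$-element $F$ it manufactures a $6$-element superset $Y$ realizing the pattern of Example~\ref{particion} (parts of sizes $3,2,1$), and cites that example to conclude $\varphi|_Y\in\UR$. Your proof is more direct and self-contained---it needs neither the local-to-global machinery of Theorem~\ref{4suffices} nor the separately verified finite Example~\ref{particion}. What the paper's approach buys is a demonstration of the Theorem~\ref{4suffices} method in action, which is then reused for the finite-change results (Propositions~\ref{infinitepartition} and~\ref{3infiniteparts}); your approach buys transparency and avoids external dependencies.
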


\begin{proof} Let  $(A_i)_{i\in I}$ be a partition as in the hypothesis. We will use Proposition \ref{4suffices} to show that $\varphi\in \UR$. Let $F\subseteq \N$ be  a set with 4 elements. There are two cases to be considered. If $F$ is homogeneous, then $\varphi|_F\in \UR$. Otherwise, there are  $i,j,k\in I$ such that $F\subseteq A_i\cup A_j\cup A_k$, $|F\cap A_i|\leq 3$, $|F\cap A_j|\leq 2$ and $|F\cap A_k|\leq 1$. Thus, there is $Y$ such that $F\subseteq Y\subset A_i\cup A_j\cup A_k$ such that $\varphi|_Y$ is (isomorphic to) the coloring in Example \ref{particion} and hence $\varphi|_Y\in \UR$. 
\end{proof}

The following example shows that  Proposition \ref{4suffices} cannot be  strengthened in the following sense. It can happen that a coloring $\varphi$ is reconstructible but there is $F\subseteq X$ with $|F|\leq 4$ and  $\varphi|_F\not\in \UR$.

\begin{ex}
\label{4sufficesB}  Let $\N=A\cup B\cup C$ be a partition of $\N$ into infinite sets, and $\varphi$ be the coloring associated to the partition. By Theorem \ref{colorpartition}, $\varphi\in \UR$. However,  if $x,y\in A$, $z,w\in B$ and $F=\{x,y,z,w\}$, then $\varphi|_F\not\in\UR$ as $\{x,z\}$ is critical for $\varphi|_F$. 
\end{ex}

In the following we deal with the finite changes of the coloring associated  to partition of $\N$. We  show  that whether a finite change of such coloring is in  $\UR$  depends on the type of partitions.

\begin{prop}\label{infinitepartition} 
Let $(A_k)_{k}$ be an infinite partition of $\N$. 
Then, every finite change of the coloring associated to the partition belongs to $\UR$.
\end{prop}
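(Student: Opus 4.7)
The plan is to observe that the coloring $\varphi$ associated to an infinite partition $(A_\beta)_{\beta<\omega}$ already satisfies property $E_0$, and then invoke the machinery from Section 3 directly, rather than trying to reprove things from scratch via Theorem \ref{4suffices}. First I would verify $E_0$ for $\varphi$: given any finite $F\subset\N$, the set $F$ meets only finitely many of the $A_\beta$'s, so since there are $\omega$-many parts there exists an index $\gamma$ with $A_\gamma\cap F=\emptyset$. Picking any $z\in A_\gamma$ (nonempty by the definition of a partition), we have $z\notin F$ and, for every $x\in F$, the elements $x$ and $z$ lie in different parts, whence $\varphi(\{z,x\})=0$. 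This is exactly property $E_0$.

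Next, Lemma \ref{finitechangesEi} asserts that property $E_0$ is preserved under finite changes: for any finite $a\subset[\N]^2$, the perturbed coloring $\varphi_a$ still has property $E_0$. Finally, Proposition \ref{EiUR} tells us that every coloring with property $E_0$ (or $E_1$) belongs to $\UR$, so $\varphi_a\in\UR$. Since $a$ was an arbitrary finite subset of $[\N]^2$, every finite change of $\varphi$ lies in $\UR$, which is the desired conclusion.

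There is no real obstacle here, but one subtle point is worth flagging: the hypothesis that the partition be \emph{infinite} is essential, because it is precisely the availability of a part disjoint from an arbitrary finite $F$ that yields $E_0$. For a partition into finitely many parts (even three, cf.\ Theorem \ref{colorpartition}), once $F$ intersects every part no such $z$ exists and this shortcut collapses; this is consistent with Proposition \ref{Two_parts_unrecoverable}, which shows that finite changes of a two-part coloring are generically \emph{not} reconstructible. So the whole argument hinges on the single observation that ``infinitely many parts'' is tantamount to $E_0$ for the associated coloring.
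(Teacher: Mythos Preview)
Your proposal is correct and follows essentially the same approach as the paper: both establish that $\varphi_a$ has property $E_0$ and then invoke Proposition~\ref{EiUR}. The only cosmetic difference is that the paper verifies $E_0$ for $\varphi_a$ directly (choosing a part $A_\alpha$ disjoint from $F\cup\bigcup a$), whereas you first check $E_0$ for $\varphi$ and then appeal to Lemma~\ref{finitechangesEi}; the content is the same.
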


\begin{proof}
Let $a\subset[\N]^2$ be a finite set and $\varphi$ be the coloring on $\N$ associated to the partition $(A_k)_{k}$. We claim that $\varphi_a$ has the property $E_0$ and thus it is in $\UR$, by Proposition \ref{EiUR}. Let $F\subseteq \N$ be a  finite set. Let $k$ be  such that $(F\cup\{x,y\})\cap A_k=\emptyset$ for all $\{x,y\}\in a$. Pick $z\in A_k$. Then $\varphi_a(\{z,w\})=\varphi(\{z,w\})=0$ for all $w\in F$.
\end{proof}

\begin{prop}
\label{3infiniteparts} 
Let $(A_i)_{i<k}$ be a finite partition of $\N$, where $k>2$, and at least three $A_i$ are infinite. Then, every  finite change of the coloring associated to the partition is in  $\UR$.
\end{prop}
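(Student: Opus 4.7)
The plan is to apply Theorem~\ref{4suffices}. Given $F \subseteq \N$ with $|F| \leq 4$, fix three infinite parts $A_{i_0}, A_{i_1}, A_{i_2}$ of the partition and choose, for each $j \in \{0,1,2\}$, a pair $G_j \subseteq A_{i_j}$ disjoint from $F$ and large enough that every pair of $[\N]^2$ that meets $G := G_0 \cup G_1 \cup G_2$ avoids $a$ (possible since each $A_{i_j}$ is infinite while both $F$ and $a$ are finite). Set $Y = F \cup G$ and $\sigma = \varphi_a|Y$. Because every pair touching $G$ lies outside $a$, $\sigma$ coincides with the restriction of the partition coloring $\varphi|Y$ except possibly on pairs inside $[F]^2$.

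To show $\sigma \in \UR$, take any $\rho \in 2^{[Y]^2}$ with $\h(\rho) = \h(\sigma)$. I would first pin down $\rho$ on $[G]^2$ up to global complementation: each transversal $\{g_0, g_1, g_2\}$ with $g_j \in G_j$ is $\sigma$-$0$-homogeneous, hence $\rho$-homogeneous, and chaining overlapping transversals forces $\rho$ to take a single value $c$ on every cross pair between distinct $G_j$'s. Then for any $g,g' \in G_j$ and $g'' \in G_l$ with $l \neq j$, the triple $\{g,g',g''\}$ has $\sigma$-values $1,0,0$ and is therefore not $\rho$-homogeneous, which combined with $\rho(\{g,g''\}) = \rho(\{g',g''\}) = c$ yields $\rho|[G_j]^2 \equiv 1 - c$. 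Replacing $\rho$ by $1 - \rho$ if necessary, assume $c = 0$, so $\rho = \sigma$ on $[G]^2$.

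Next, I would show $\rho = \sigma$ on $F \times G$. For $u \in F$ and $g \in G_j$, I produce a witness triple $\{u, g, g^{*}\}$ with $g^{*} \in G$ that is $\sigma$-homogeneous, namely (a) a $1$-homogeneous triple using $g^{*} \in G_j \setminus \{g\}$ when $u \in A_{i_j}$; (b) a $0$-homogeneous triple using $g^{*} \in G_m$ (with $m$ the remaining index in $\{0,1,2\}\setminus\{j,l\}$) when $u \in A_{i_l}$ for some $l \in \{0,1,2\} \setminus \{j\}$; and (c) a $0$-homogeneous triple using any $g^{*} \in G_l$, $l \neq j$, when $u$ lies outside $A_{i_0} \cup A_{i_1} \cup A_{i_2}$. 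In each case, the already-determined value $\rho(\{g, g^{*}\})$ propagates via $\rho$-homogeneity of the triple to $\rho(\{u,g\}) = \sigma(\{u,g\})$.

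Finally, for any $\{u,v\} \in [F]^2$, since each of $u,v$ belongs to at most one $A_{i_j}$, there exists $j \in \{0,1,2\}$ with $\{u,v\} \cap A_{i_j} = \emptyset$; picking $g \in G_j$ gives $\sigma(\{u,g\}) = \sigma(\{v,g\}) = 0$, and by the previous step also $\rho(\{u,g\}) = \rho(\{v,g\}) = 0$. The triple $\{u,v,g\}$ is then $\sigma$-homogeneous exactly when $\sigma(\{u,v\}) = 0$ and $\rho$-homogeneous exactly when $\rho(\{u,v\}) = 0$; as $\h(\rho) = \h(\sigma)$ these conditions coincide, forcing $\rho(\{u,v\}) = \sigma(\{u,v\})$. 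Thus $\rho = \sigma$ on $[Y]^2$, so $\sigma \in \UR$ and Theorem~\ref{4suffices} gives $\varphi_a \in \UR$. The main obstacle is the case analysis on $F \times G$: one must manufacture, for every configuration of $u$ relative to $A_{i_0}, A_{i_1}, A_{i_2}$, a companion element of $G$ rendering some triple $\sigma$-homogeneous with $\rho$-color already known. This is precisely the place where having only two infinite parts would break the argument, matching the contrast with Proposition~\ref{Two_parts_unrecoverable}.
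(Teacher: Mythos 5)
Your proof is correct, and it follows the same skeleton as the paper's: reduce to Theorem~\ref{4suffices} by embedding each $F\in[\N]^{\leq 4}$ into a finite set $Y$ built from points of three infinite parts chosen beyond $F$ and beyond $\bigcup a$, so that $\varphi_a$ agrees with the partition coloring on all pairs touching the auxiliary points. Where you diverge is in how the finite configuration is handled. The paper splits into two cases according to how $F$ meets the infinite parts, adds only two or three auxiliary points, and quotes Example~\ref{propertyE} and Example~\ref{particion2} for the reconstructibility of the resulting finite colorings (the latter only ``by an analogous argument''). You instead take a single uniform configuration --- two points from each of three infinite parts --- and verify reconstructibility of $\varphi_a|Y$ from scratch: transversals pin down the cross pairs up to complementation, non-homogeneous triples then determine $[G_j]^2$, and witness triples propagate the colors first to $F\times G$ and then to $[F]^2$. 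This costs a slightly larger $Y$ but buys a case-free, self-contained argument that actually supplies the details the paper leaves implicit; it also isolates cleanly why three infinite parts are needed (step (b) of your $F\times G$ analysis requires a third part to produce a $0$-homogeneous witness triple), matching the failure for two parts in Proposition~\ref{Two_parts_unrecoverable}.
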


\begin{proof}
Let  $a\subset[\N]^2$ be a finite set and $\varphi$ be the coloring associated to the partition $(A_i)_{i<k}$. We will use Proposition \ref{4suffices} to show that $\varphi_a\in \UR$. Let $F\subseteq \N$ be  a set with 4 elements and $G=\bigcup a$.  We consider two cases: (1) There are $i, j$  such that  $A_i$ and $A_j$ are infinite and $F\cap ( A_i\cup A_j)=\emptyset$. Let  $u\in A_i$, $v\in A_j$ such that $\{u,v\}\not\in G$. Put  $Y=F\cup \{u, v\}$.   Since $\varphi_a(\{u,v\})=\varphi_a(\{u,x\})=\varphi_a(\{v,x\})$ for all $x\in F$, $\langle Y,\varphi_a|_Y\rangle$   is (isomorphic to) the coloring in Proposition \ref{propertyE} and thus $\varphi_a|_Y\in \UR$.
(2) Let $i, j, l$ be such that $A_i$, $A_j$ and $A_l$ are infinite. Let $x\in A_j$, $y\in A_j$ and $z\in A_l$ such that $\{x,y,z\}\cap (F\cup G)=\emptyset$ Let $Y=F\cup\{x,y,z\}$. Suppose that at most one of the sets $A_i$, $A_j$ and $A_l$ is disjoint from $F$. 
 By an argument analogous to that used in Proposition \ref{particion2} it follows that $\varphi_a|_Y\in\UR$.
\end{proof}

The following example shows that Proposition \ref{3infiniteparts} is optimal in the sense that we cannot ensure reconstructibility of all finite changes of the coloring associated to finite partitions of $\N$. It is interesting, since it shows that the reconstructibility is a somewhat unstable property.

\begin{ex}\label{A_0finite} There is a partition $\N=A_0\cup A_1\cup A_2$ of $\N$, with $|A_0|=2$, such that some  finite changes of the coloring associated to it are in $\UR$ and some are  in  $\neg\UR$.

\medskip

Let $A_0=\{0,1\}$, $A_1=\{2n+1:n>0\}$, $A_2=\{2n:n>0\}$, $\varphi$  be the coloring on $\N$ associated to the partition $(A_i)_{i<3}$, and $a=\big\{\{0,4\},\{1,5\},\{4,5\}\big\}$. Notice that $\{4,5\}$ is critical for $\varphi_a$, thus  $\varphi_a\in\neg\UR$ by Proposition \ref{criterio-no-UR}.

\medskip

\begin{minipage}{.5\textwidth}
\centering
\begin{tikzpicture}
     \draw[thick] (0,0) -- (0,1);
\draw[thick] (1,0) -- (1,1);
\draw[thick] (2,0) -- (2,1);
\draw[thick] (1,1) -- (2,1);
\draw[thick] (0,0) -- (1,1);
\draw[thick] (1,1) -- (1,4);
\draw[thick] (2,1) -- (2,4);

\draw[mygray, thick] (0,1) -- (1,1);
\draw[mygray, thick] (0,1) -- (1,0);
\draw[mygray, thick] (0,0) -- (1,0);
\draw[mygray, thick] (1,0) -- (2,0);
\draw[mygray, thick] (1,0) -- (2,1);
\draw[mygray, thick] (1,1) -- (2,0);
\draw[mygray, thick] (1,2) -- (2,2);
\draw[mygray, thick] (1,3) -- (2,3);
\draw[mygray, thick] (1,4) -- (2,4);
\draw[mygray, thick] (1,1) -- (2,2);
\draw[mygray, thick] (1,2) -- (2,1);
\draw[mygray, thick] (1,3) -- (2,2);
\draw[mygray, thick] (1,2) -- (2,3);
\draw[mygray, thick] (1,3) -- (2,4);
\draw[mygray, thick] (1,4) -- (2,3);

\draw[mygray, thick] (0,0) .. controls (.5,-0.5) and (1.5,-0.5) .. (2,0);
\draw[thick] (0,1) .. controls (.5,1.5) and (1.5,1.5) .. (2,1);

\filldraw[black] (0,0) circle (1pt) node[anchor=east] {\scriptsize $0$};
\filldraw[black] (0,1) circle (1pt) node[anchor=east] {\scriptsize $1$};
\filldraw[black] (1,0) circle (1pt) node[anchor=north] {\scriptsize $2$};
\filldraw[black] (1,1) circle (1pt) node (a) at (0.8,1.15) {\scriptsize $4$};
\filldraw[black] (2,0) circle (1pt) node[anchor=west] {\scriptsize $3$};
\filldraw[black] (2,1) circle (1pt) node[anchor=west] {\scriptsize $5$};
\filldraw[black] (2,2) circle (1pt) node[anchor=west] {\scriptsize $7$};
\filldraw[black] (1,2) circle (1pt) node[anchor=east] {\scriptsize $6$};
\filldraw[black] (1,3) circle (1pt) node[anchor=east] {\scriptsize $8$};
\filldraw[black] (2,3) circle (1pt) node[anchor=west] {\scriptsize $9$};
\filldraw[black] (1,4) circle (1pt) node[anchor=east] {\scriptsize $10$};
\filldraw[black] (2,4) circle (1pt) node[anchor=west] {\scriptsize $11$};

\node (b) at (1.5,4.4) {$\vdots$};
\node (b) at (0,-1) {$A_0$};
\node (b) at (1,-1) {$A_2$};
\node (b) at (2,-1) {$A_1$};

\end{tikzpicture}
\captionof{figure}{Graph of $\varphi_a$}
\end{minipage}
\begin{minipage}{.5\textwidth}
\centering
\begin{tikzpicture}
     \draw[thick] (0,0) -- (0,1);
\draw[thick] (1,0) -- (1,1);
\draw[thick] (2,0) -- (2,1);
\draw[mygray, thick] (0,0) -- (1,1);
\draw[thick] (1,1) -- (1,3);
\draw[thick] (2,1) -- (2,3);
\draw[mygray, thick] (0,1) -- (1,1);
\draw[mygray, thick] (0,1) -- (1,0);
\draw[mygray, thick] (0,0) -- (1,0);
\draw[mygray, thick] (1,0) -- (2,0);
\draw[mygray, thick] (1,0) -- (2,1);
\draw[mygray, thick] (1,1) -- (2,0);
\draw[mygray, thick] (1,2) -- (2,2);
\draw[mygray, thick] (1,3) -- (2,3);
\draw[mygray, thick] (1,1) -- (2,2);
\draw[mygray, thick] (1,2) -- (2,1);
\draw[mygray, thick] (1,3) -- (2,2);
\draw[mygray, thick] (1,2) -- (2,3);

\draw[thick] (1,1) -- (2,1);

\draw[mygray, thick] (0,0) .. controls (.5,-0.5) and (1.5,-0.5) .. (2,0);
\draw[mygray, thick] (0,1) .. controls (.5,1.5) and (1.5,1.5) .. (2,1);

\filldraw[black] (0,0) circle (1pt) node[anchor=east] {\scriptsize $0$};
\filldraw[black] (0,1) circle (1pt) node[anchor=east] {\scriptsize $1$};
\filldraw[black] (1,0) circle (1pt) node[anchor=north] {\scriptsize $2$};
\filldraw[black] (1,1) circle (1pt) node (a) at (0.8,1.15) {\scriptsize $4$};
\filldraw[black] (2,0) circle (1pt) node[anchor=west] {\scriptsize $3$};
\filldraw[black] (2,1) circle (1pt) node[anchor=west] {\scriptsize $5$};
\filldraw[black] (2,2) circle (1pt) node[anchor=west] {\scriptsize $7$};
\filldraw[black] (1,2) circle (1pt) node[anchor=east] {\scriptsize $6$};
\filldraw[black] (1,3) circle (1pt) node[anchor=east] {\scriptsize $8$};
\filldraw[black] (2,3) circle (1pt) node[anchor=west] {\scriptsize $9$};

\node (b) at (1.5,3.3) {$\vdots$};
\end{tikzpicture}
\captionof{figure}{Graph of $\varphi_b$} 
\end{minipage}

\bigskip

On the other hand, notice that $\varphi_\emptyset=\varphi\in \UR$ by Theorem \ref{colorpartition}. A non-trivial finite change of $\varphi$ which belongs to $\UR$ is $\varphi_b$ for $b=\big\{\{4,5\}\big\}$. To see this, we argue as in the proof of Proposition \ref{3infiniteparts}. 
Let $F=\{0,1,2,3,4,5\}$. It is  easy to verify that $\varphi_b|_F\in \UR$. 

\vspace{.5cm}

\end{ex}

\section{Maximal homogeneous sets}\label{maximalsection}

In this section we explore reconstructibility of a coloring looking at the maximal homogeneous sets. Let us start by observing the obvious: for any coloring of $\N$ there are maximal homogeneous sets (by Zorn's Lemma).   For any cardinal $1\leq \kappa\leq \aleph_0$ or $\kappa=2^{\aleph_0}$ there is a coloring on $\N$ with exactly $\kappa$ maximal homogeneous sets. In fact,  a constant coloring has $\N$ as the unique maximal homogeneous set. Let $\varphi$ be  the coloring associated to a partition of $\N$ into 2 infinite pieces. Then $\varphi$ has 2 maximal homogeneous sets. For $3\leq \kappa<\aleph_0$, we left to the reader to check that  a simple finite change of $\varphi$ produces a coloring with exactly $\kappa$  maximal homogeneous sets. The coloring associated to a partition of $\N$ into 3 infinite pieces has countable many maximal homogeneous sets. Finally, the coloring associated to a partition of $\N$ into infinitely many infinite pieces has $2^{\aleph_0}$ maximal homogeneous sets.  We do not know in general how this relates to the reconstructibility of the colorings. However, we present some results when $\kappa\leq 2$.

\begin{lema}
\label{externos}
Let $\varphi$ be a coloring on $\N$. We have:
\begin{itemize}
\item[(i)] Any homogeneous set is contained in a maximal homogeneous set.

\item[(ii)] Let  $A=\{x\in \N: \;x\in H \; \text{for some maximal $H\in\h(\varphi)$}\}$. Then $|\N\setminus A|\leq 2$. 

\item[(iii)] If $H$ is a maximal homogeneous set of color $i$ and $x\nin A$, then $\{y\in H:\; \varphi(\{x,y\})=i\}$ has at most one element.

\end{itemize}
\end{lema}

\proof (i) It is a well known result that  easily  follows from Zorn's lemma. 

(ii) Towards a contradiction, suppose $v,w,z\nin A$ with $\varphi(\{v,z\})=0$ and $\varphi(\{v,w\})=\varphi(\{w,z\})=1$.
 Let $H$ be an homogeneous set.  One has to consider whether $H$ is of  color 0 or 1. Both cases are treated analogously.

 (a) Suppose $H$ is of color 0.  Let  $x_1\in H$.  Since $v\nin A$, $\{x_1, v,z\}$ is not homogeneous,  we assume w.l.o.g. that  $\varphi(\{v,x_1\})=1$. As 
$\{x_1, v,w\}$ is not homogeneous, $\varphi(\{w,x_1\})=0$. Let $x_2\in H$ different than $x_1$. Since
$\{x_1, x_2,w\}$ is not homogeneous,   $\varphi(\{w,x_2\})=1$. Analogously, we conclude that  $\varphi(\{v,x_2\})=\varphi(\{x_2,z\})=0$. Therefore $\{v,z,x_2\}$ is a 0-homogeneous set, which contradicts that $z\nin A$. 

(b) Suppose $H$ is of color 1.  Let  $x_1\in H$.  Since $v\nin A$, $\{x_1, v,z\}$ is not homogeneous and thus  $\varphi(\{z,x_1\})=1$. As 
$\{x_1, v,w\}$ is not homogeneous,  $\varphi(\{w,x_1\})=0$.   Let $x_2\in H$ be different than $x_1$. Since
$\{x_1, x_2,z\}$ is not homogeneous,   $\varphi(\{z,x_2\})=0$. 
Analogously, we conclude that  $\varphi(\{v,x_2\})=1$ and $\varphi(\{w,x_1\})=\varphi(\{w,x_2\})=0$. 
Let $x_3\in H\setminus\{x_1,x_2\}$. Then $\varphi(\{z,x_3\})=0$  and $\varphi(\{v,x_3\})=1$. 
Therefore $\{v,x_2,x_3\}$ is a 1-homogeneous set, which contradicts that $v\nin A$. 

(iii) Suppose there are $y,z\in H$ such that $\varphi(\{x,y\})=\varphi(\{x,z\})=i$.   We have that $\varphi(\{y,z\})=i$ since $H$ is of color $i$. Then  $\{x,y,z\}$ is  homogeneous, which contradicts that  $x\nin A$.
\endproof

We extend the definition of the finite changes $\varphi_a$ of a coloring as follows. For each coloring $\varphi$ on $\N$ and  $A\subseteq [\N]^2$, let $\varphi_A$ be given by $\varphi_A(s)=\varphi(s)$ if $s\nin A$ and $\varphi_A(s)=1-\varphi(s)$ if $s\in A$.  The next proposition characterizes the  colorings with exactly one maximal homogeneous set.

\begin{prop}
Let $\varphi$ be a non constant coloring on $\N$. Then, $\hom(\varphi)$ has exactly one maximal element if and only if one of the following holds for a constant coloring $\psi$ on $\N$.

\begin{itemize}
\item[(i)]  $\varphi=\psi_{A_1}$ where $A_1=\{\{x_0,y\}:  \;\text{$y\in \N\setminus \{x_0\}$}\}$ for some $x_0\in \N$. 

\item[(ii)] $\varphi=\psi_{A_2}$ where $A_2=\{\{x_0,y\}:  \;\text{$y\in \N\setminus \{x_0, x_1\}$}\}$ for some $x_0, x_1\in \N$. 

\item[(iii)] $\varphi=\psi_{A_3}$ where $A_3=\{\{x_0,y\}:  \;\text{$y\in \N\setminus \{x_0,x_1,x_2\}$}\}\cup\{\{x_2,y\}:  \;\text{$y\in \N\setminus \{x_0,x_2\}$}\} $ for some $x_0, x_1, x_2\in \N$.

\item[(iv)] $\varphi=\psi_{A_4}$ where $A_4=\{\{x_0,y\}:  \;\text{$y\in \N\setminus \{x_0, x_2\}$}\}\cup\{\{x_2,y\}:  \;\text{$y\in \N\setminus \{x_2\}$}\} $ for some $x_0, x_2\in \N$.

\end{itemize}
In all cases, $\{x_0,x_1\}$ is a critical pair for $\varphi$ and therefore such colorings  are  non-reconstructible. 
\end{prop}

\begin{proof}  The pictures below describe each case. It is clear that each of the colorings $\psi_{A_i}$  has exactly one maximal homogeneous set. Let $M$ be the unique maximal homogeneous set of $\varphi$, say of color $i$ and fix $x_1\in M$. Let  $A=\{x\in \N: \;x\in H \; \text{for some maximal $H\in\h(\varphi)$}\}$. Since $\varphi$ is non constant,  $|\N\setminus A|\leq 2$, by Lemma \ref{externos}. We consider two cases: 
(1) $\N\setminus A=\{x_0\}$. We have two subcases depending on the color of $\{x_0,x_1\}$. If  $\varphi(\{x_0,x_1\})=1-i$. Then $\varphi(\{x_0,x\})=1-i$ for all $x\in M$, by the uniqueness  of $M$. Then $\varphi=\psi_{A_1}$. Analogously, if 
$\varphi(\{x_0,x_1\})=i$, then $\varphi=\psi_{A_2}$. (2) $\N\setminus A=\{x_0,x_2\}$. As in case (1) we have that 
if $\varphi(\{x_0,x_1\})=i$, then $\varphi=\psi_{A_3}$. And,  if $\varphi(\{x_0,x_1\})=1-i$, then $\varphi=\psi_{A_4}$.

\bigskip

\begin{minipage}{.25\textwidth}
\centering
\begin{tikzpicture}
\draw[thick] (1,0) -- (1,1);
\draw[thick] (0,0) -- (1,1);
\draw[thick] (1,1) -- (1,4);

\draw[mygray, thick] (0,0) -- (1,1);
\draw[mygray, thick] (0,0) -- (1,2);
\draw[mygray, thick] (0,0) -- (1,0);
\draw[mygray, thick] (0,0) -- (1,3);
\draw[mygray, thick] (0,0) -- (1,4);

\filldraw[black] (0,0) circle (1pt) node[anchor=east] {\scriptsize $x_0$};
\filldraw[black] (1,0) circle (1pt) node[anchor=north] {\scriptsize $x_1$};
\filldraw[black] (1,1) circle (1pt) node (a) at (0.8,1.15) {\scriptsize $$};
\filldraw[black] (1,2) circle (1pt) node[anchor=east] {\scriptsize $$};
\filldraw[black] (1,3) circle (1pt) node[anchor=east] {\scriptsize $$};
\filldraw[black] (1,4) circle (1pt) node[anchor=east] {\scriptsize $$};

\node (b) at (1,4.4) {$\vdots$};
\end{tikzpicture}
\captionof{figure}{$\scriptstyle\psi_{A_1}$}

\end{minipage}
\begin{minipage}{.25\textwidth}
\centering
\begin{tikzpicture}
\draw[thick] (1,0) -- (1,1);
\draw[thick] (0,0) -- (1,1);
\draw[thick] (1,1) -- (1,4);

\draw[mygray, thick] (0,0) -- (1,1);
\draw[mygray, thick] (0,0) -- (1,2);
\draw[thick] (0,0) -- (1,0);
\draw[mygray, thick] (0,0) -- (1,3);
\draw[mygray, thick] (0,0) -- (1,4);

\filldraw[black] (0,0) circle (1pt) node[anchor=east] {\scriptsize $x_0$};
\filldraw[black] (1,0) circle (1pt) node[anchor=west] {\scriptsize $x_1$};
\filldraw[black] (1,1) circle (1pt) node (a) at (0.8,1.15) {\scriptsize $$};
\filldraw[black] (1,2) circle (1pt) node[anchor=east] {\scriptsize $$};
\filldraw[black] (1,3) circle (1pt) node[anchor=east] {\scriptsize $$};
\filldraw[black] (1,4) circle (1pt) node[anchor=east] {\scriptsize $$};
\node (b) at (1,4.4) {$\vdots$};
\end{tikzpicture}
\captionof{figure}{$\scriptstyle\psi_{A_2}$} 

\end{minipage}
\begin{minipage}{.25\textwidth}
\centering
\begin{tikzpicture}
\draw[thick] (1,0) -- (1,1);
\draw[thick] (0,0) -- (1,1);
\draw[thick] (1,1) -- (1,4);
\draw[thick] (0,0) -- (0,1);

\draw[mygray, thick] (0,0) -- (1,1);
\draw[mygray, thick] (0,0) -- (1,2);
\draw[mygray, thick] (0,0) -- (1,3);
\draw[mygray, thick] (0,0) -- (1,4);
\draw[thick] (0,0) -- (1,0);
\draw[mygray, thick] (0,1) -- (1,0);
\draw[mygray, thick] (0,1) -- (1,1);
\draw[mygray, thick] (0,1) -- (1,2);
\draw[mygray, thick] (0,1) -- (1,3);
\draw[mygray, thick] (0,1) -- (1,4);

\filldraw[black] (0,0) circle (1pt) node[anchor=east] {\scriptsize $x_0$};
\filldraw[black] (0,1) circle (1pt) node[anchor=east] {\scriptsize $x_2$};
\filldraw[black] (1,0) circle (1pt) node[anchor=west] {\scriptsize $x_1$};
\filldraw[black] (1,1) circle (1pt) node (a) at (0.8,1.15) {\scriptsize $$};
\filldraw[black] (1,2) circle (1pt) node[anchor=east] {\scriptsize $$};
\filldraw[black] (1,3) circle (1pt) node[anchor=east] {\scriptsize $$};
\filldraw[black] (1,4) circle (1pt) node[anchor=east] {\scriptsize $$};

\node (b) at (1,4.4) {$\vdots$};
\end{tikzpicture}
\captionof{figure}{$\scriptstyle\psi_{A_3}$} 

\end{minipage}
\begin{minipage}{.25\textwidth}
\centering
\begin{tikzpicture}
\draw[thick] (1,0) -- (1,1);
\draw[thick] (0,0) -- (1,1);
\draw[thick] (1,1) -- (1,4);
\draw[thick] (0,0) -- (0,1);

\draw[mygray, thick] (0,0) -- (1,1);
\draw[mygray, thick] (0,0) -- (1,2);
\draw[mygray, thick] (0,0) -- (1,3);
\draw[mygray, thick] (0,0) -- (1,4);
\draw[mygray,thick] (0,0) -- (1,0);
\draw[mygray, thick] (0,1) -- (1,0);
\draw[mygray, thick] (0,1) -- (1,1);
\draw[mygray, thick] (0,1) -- (1,2);
\draw[mygray, thick] (0,1) -- (1,3);
\draw[mygray, thick] (0,1) -- (1,4);

\filldraw[black] (0,0) circle (1pt) node[anchor=east] {\scriptsize $x_0$};
\filldraw[black] (0,1) circle (1pt) node[anchor=east] {\scriptsize $x_2$};
\filldraw[black] (1,0) circle (1pt) node[anchor=west] {\scriptsize $x_1$};
\filldraw[black] (1,1) circle (1pt) node (a) at (0.8,1.15) {\scriptsize $$};
\filldraw[black] (1,2) circle (1pt) node[anchor=east] {\scriptsize $$};
\filldraw[black] (1,3) circle (1pt) node[anchor=east] {\scriptsize $$};
\filldraw[black] (1,4) circle (1pt) node[anchor=east] {\scriptsize $$};

\node (b) at (1,4.4) {$\vdots$};
\end{tikzpicture}
\captionof{figure}{$\scriptstyle\psi_{A_4}$} 

\end{minipage}

\end{proof}

\bigskip

Now we analyze colorings with exactly two maximal homogeneous sets. The prototype is the coloring associated to a partition of $\N$ into two parts. We present the analysis according to the cardinality of $\N\setminus (H_1\cup H_2)$ where $H_1$ and $H_2$ are the maximal homogeneous sets. 

\begin{lema}
\label{samecolor}
Let $\varphi$ be a coloring on $\N$ such that $\h(\varphi)$ has exactly two maximal elements $H_1$ and $H_2$. Then $\varphi([H_1]^2)=\varphi([H_2]^2)$.
 \end{lema}

\begin{proof} Assume towards a contradiction that $\varphi([H_1]^2)=\{1\}$ and $\varphi([H_2]^2)=\{0\}$. In particular, $|H_1\cap H_2|\leq 1$. By Ramsey's Theorem, $\h(\varphi)$ contains an infinite set, thus we can assume that $H_1$ is infinite. Then, $|H_1\setminus H_2|=\aleph_0$ and $|H_2\setminus H_1|\geq 2$. For every $x\nin H_2$, let 
$$
L_x=\{y\in H_2: \varphi(\{x,y\})=0\}.
$$
We claim that  $|L_x|\leq 1$. Otherwise, $|\{x\}\cup L_x|\geq 3$ and thus it is a 0-homogeneous set. Therefore,   $\{x\}\cup L_x\subseteq H_2$, a contradiction. Analogously, letting  $M_y=\{x\in H_1: \varphi(\{y,x\})=1\}$ for any $y\nin H_1$, we have that  $|M_y|\leq 1$.

Let us fix $y\in H_2\setminus H_1$. Since $|M_y|\leq 1$, there are $p,q\in H_1\setminus H_2$  such that  $\varphi(\{y,p\})=\varphi(\{y,q\})=0$. Thus,  $L_p=L_q=\{y\}$. Since $|H_1\cap H_2|\leq 1$ and $|H_2|\geq 3$, fix $z\in H_2\setminus (H_1\cup\{y\})$. Thus, $z\notin L_p\cup L_q$. That is, $\varphi(\{z,p\})=\varphi(\{z,q\})=1$, and therefore $\{p,q,z\}\in\h(\varphi)$. By Proposition \ref{externos}(i), $\{p,q,z\}$ is contained in either $H_1$ or $H_2$, which is impossible.
\end{proof}

\begin{prop}
Let $\varphi$ be a coloring on $\N$ with exactly two maximal homogeneous sets $H_1$ and $H_2$ such that $\N=H_1\cup H_2$. Then, $\varphi$ is reconstructible if and only if  $H_1\cap H_2\neq \emptyset$.
\end{prop}

\proof  Clearly, $\h(\varphi)$ has exactly two maximal elements: $H_1$ and $H_2$.  By Proposition \ref{samecolor}, we assume that $H_1$ and $H_2$ are homogeneous of color 1. Suppose $H_1\cap H_2\neq\emptyset$. Let $x_0\in H_1\cap H_2$ and $y\in H_1\setminus H_2$, $z\in H_2\setminus H_1$. 
Notice that $\{x_0,y,z\}$ is not homogeneous, otherwise $\{x,y,z\}\subseteq H_1$ or $\{x,y,z\}\subseteq H_2$, which is impossible.  Thus $\varphi(\{y,z\})=0$. Then, this coloring looks similar to the one  depicted in Figure \ref{fig2max}. The points below $x_0$ are the elements in $H_1\cap H_2$. 

\bigskip

\begin{center}
\begin{tikzpicture}[scale=1.2]

\node (b) at (0,2.5) {$\vdots$};
\node (b) at (2,2.5) {$\vdots$};

\draw[thick] (0,0) -- (0,1);
\draw[thick] (0,1) -- (0,2);

\draw[thick] (2,0) -- (2,1);
\draw[mygray,thick] (0,0) -- (2,0);
\draw[mygray,thick] (0,0) -- (2,2);
\draw[mygray,thick] (0,0) -- (2,1);
\draw[thick] (2,1) -- (2,2);
\draw[mygray,thick] (0,2) -- (2,2);
\draw[mygray,thick] (0,1) -- (2,2);

\draw[thick] (0,2) -- (1,-1);
\draw[mygray,thick] (0,2) -- (2,1);
\draw[mygray,thick] (0,2) -- (2,0);
\draw[mygray,thick] (0,0) -- (2,1);
\draw[mygray,thick] (0,1) -- (2,1);
\draw[mygray,thick] (0,1) -- (2,0);
\draw[thick] (0,0) -- (1,-1);
\draw[thick] (0,1) -- (1,-1);
\draw[thick] (2,1) -- (1,-1);
\draw[thick] (2,0) -- (1,-1);
\draw[thick] (1,-1) -- (2,2);

\draw[thick] (1,-1) -- (1,-2);
\draw[thick] (1,-2) -- (1,-3);
\node (b) at (1,-3.5) {$\vdots$};

\filldraw[black] (0,0) circle (1pt) node[anchor=east] {\scriptsize $y$};
\filldraw[black] (0,1) circle (1pt) node[anchor=east] {\scriptsize $$};
\filldraw[black] (0,2) circle (1pt) node[anchor=east] {\scriptsize $$};

\filldraw[black] (2,1) circle (1pt) node[anchor=west] {\scriptsize $$};
\filldraw[black] (2,0) circle (1pt) node[anchor=west] {\scriptsize $z$};
\filldraw[black] (2,2) circle (1pt) node[anchor=west] {\scriptsize $$};

\filldraw[black] (1,-1) circle (1pt) node[anchor=west] {\scriptsize $x_0$};
\filldraw[black] (1,-2) circle (1pt) node[anchor=west] {\scriptsize $$};
\filldraw[black] (1,-3) circle (1pt) node[anchor=west] {\scriptsize $$};

\end{tikzpicture}
\captionof{figure}{}
\label{fig2max} 

\end{center}
To see that $\varphi$ is reconstructible we use Proposition \ref{4suffices}. Let $F\subset\N$ of size 4. If $F$ is not contained in an homogeneous set, then there is a set $Y$ of size 5 such $F\subset Y$ and $\varphi|_Y$ is isomorphic to the coloring given in Example \ref{recmax}. Then, $\varphi$ is reconstructible by Proposition \ref{4suffices}. 

Conversely, suppose now that $H_1\cap H_2=\emptyset$. Let $x\in H_1$. Then $\{y\in H_2:\; \varphi\{x,y\}=1\}$ has at most one point.  If  for some $x\in H_1$, there is $y\in H_2$ such that $\varphi(\{x,y\})=1$, then $\{x,y\}$ is a critical pair. Otherwise, if $\varphi(\{x,y\})=0$ for all $x\in H_1$ and all $y\in H_2$, then  $\{x,y\}$ is critical for any $x\in H_1$ and  $y\in H_2$. In any case, $\varphi$ is non-reconstructible by Proposition \ref{criterio-no-UR}. 
\endproof

\bigskip

Now we treat the case where $|\N\setminus (H_1\cup H_2)|=1$. Before presenting a general result, we give  an example illustrating this case.

\begin{ex}\label{exAnonempty}  Define $\varphi:[\N]^2\longrightarrow 2$ by 
$$
\varphi(\{n,m\})=\begin{cases}
			0, & \text{if $n=0$ and $m>1$;}\\
			0, & \text{if $n=1$ and $m>0$ is even;}\\
			1, & \text{if $n=0$ and $m=1$;}\\
			1, & \text{if $n=1$ and $m>0$ is odd;}\\
            1, & \text{otherwise.}
		 \end{cases}
$$
See Figure \ref{fig1externo}. 
It is not difficult to see that the only maximal homogeneous sets are $H_1=\N\setminus\{0,1\}$ and $H_2=\{2n+1:n\in \N\}$.  
\begin{center}
\begin{tikzpicture}[scale=1]
\draw[thick] (0,0) -- (0,5);
\draw[thick] (2,1.5) -- (3,1.5);
\draw[thick] (0,1) -- (2,1.5);
\draw[thick] (0,3) -- (2,1.5);
\draw[thick] (0,5) -- (2,1.5);

\draw[mygray, thick] (0,0) -- (2,1.5);
\draw[mygray, thick] (0,2) -- (2,1.5);
\draw[mygray, thick] (0,4) -- (2,1.5);

\draw[mygray, thick] (0,0) .. controls (2,.5) and (2.5,1) .. (3,1.5);
\draw[mygray, thick] (0,5) .. controls (1,4.5) and (2,4) .. (3,1.5);

\filldraw[black] (0,0) circle (1pt) node[anchor=east] {\scriptsize $2$};
\filldraw[black] (0,1) circle (1pt) node[anchor=east] {\scriptsize $3$};
\filldraw[black] (0,2) circle (1pt) node[anchor=east] {\scriptsize $4$};
\filldraw[black] (0,3) circle (1pt) node[anchor=east] {\scriptsize $5$};
\filldraw[black] (0,4) circle (1pt) node[anchor=east] {\scriptsize $6$};
\filldraw[black] (0,5) circle (1pt) node[anchor=east] {\scriptsize $7$};
\filldraw[black] (2,1.5) circle (1pt) node[anchor=north] {\scriptsize $1$};
\filldraw[black] (3,1.5) circle (1pt) node[anchor=north] {\scriptsize $0$};

\node (b) at (0,5.3) {$\vdots$};
\end{tikzpicture}
\captionof{figure}{Partial drawing of  $\varphi$}    
\label{fig1externo}
\end{center}
\end{ex}

\bigskip

\begin{prop}
Let $\varphi$ be a coloring on $\N$ with exactly two maximal homogeneous sets $H_1$ and $H_2$ and such that $\N\setminus(H_1\cup H_2)=\{z\}$ for some $z$. Then, $\varphi$ has a critical pair and therefore is non-reconstructible.
\end{prop}

\proof By Proposition \ref{samecolor}, we assume that $H_1$ and $H_2$ are homogeneous of color 1.We have to consider several cases. 

(i) Suppose  there is  $x_0\in H_1$ and  $y_0\in H_2$ such $\varphi(\{z,x_0\})=\varphi(\{z,y_0\})=1$. We claim that $\{x_0, z\}$ is a critical pair for $\varphi$. 
By a simple argument (as in the proof of Proposition \ref{externos}) we have that such $x_0$ and $y_0$ are unique. Thus
\begin{equation}\label{2max}
\varphi(\{z,x\})=\varphi(\{z,y\})=0\;\; \text{for all $x\in H_1\setminus\{x_0\}$ and $y\in H_2\setminus\{y_0\}$}.
\end{equation}
 We claim that $(H_1\cup H_2)\setminus\{x_0,y_0\}$ is homogeneous. In fact, if $x\in H_1\setminus\{x_0\}$ and $y\in H_2\setminus\{y_0\}$, then $\{z,x,y\}$ is not homogeneous. By \eqref{2max},  $\varphi(\{x,y\})=1$ and from this the claim follows.  By the maximality of $H_1$ and $H_2$, we can assume w.l.o.g. that $H_2\setminus\{y_0\}\subseteq H_1$. 
 
Notice that $\N=(H_1\setminus\{x_0\})\cup\{x_0,y_0, z\}$. Let $p\nin \{x_0,z\}$.   If $p\neq y_0$,  by  \eqref{2max}, $\varphi(\{z, p\})=0$ and $\varphi(\{x_0,p\})=1$ as $p\in H_1$. On the other hand, $\varphi(\{y_0,z\})=\varphi(\{z,x_0)=1$ and $\{x_0, y_0, z\}$ is not homogeneous. Thus  $\varphi(\{x_0, y_0\})=0$. This shows that $\{x_0, z\}$ is a critical pair.

\medskip

(ii)  Suppose  there is  $x_0\in H_1$ such $\varphi(\{z,x_0\})=1$ and for all $y\in H_2$, $\varphi(\{z,y\})=0$.  By a similar argument as before one can show that 
$H_1\setminus\{x_0\}\subseteq H_2$. Let $y_0\in H_2\setminus H_1$. Then we consider two subcases. If $\varphi(\{x_0, y_0\})=1$, then  $\{x_0,y_0\}$ is a critical pair. And, if $\varphi(\{x_0, y_0\})=0$, then  $\{z,y_0\}$ is a critical pair. 

By the symmetry of the problem, we are left with the case where $\varphi(\{z,x\})=\varphi(\{z,y\})=0$ for all $x\in H_1$ and all $y\in H_2$. This implies that $H_1\cup H_2$ is homogeneous, which is imposible by the maximality of $H_1$ and $H_2$. In fact, given $x\in H_1$ and $y\in H_2$ different, we have that $\{x,y,z\}$ is not homogeneous, thus $\varphi(\{x,y\})=1$. 

\endproof

We do not have a general result about colorings such that $\N\setminus(H_1\cup H_2)$ has two elements. We just present an example which seems interesting as it is non-reconstructible but does not have a critical pair. 

\begin{ex}The coloring $\varphi$ (see Figure \ref{6.7}) has two maximal homogeneous sets,   is non-reconstructible and has no critical pair. 
 
\begin{minipage}{.5\textwidth}\centering
\begin{tikzpicture}[scale=1]

\node (b) at (0,2.5) {$\vdots$};

\draw[thick] (0,0) -- (0,1);
\draw[thick] (0,1) -- (0,2);

\draw[thick] (0,0) -- (-1,-1);
\draw[thick] (-1,-1) -- (-1,-2);
\draw[mygray,thick] (-1,-2) -- (1,-1);
\draw[mygray,thick] (-1,-1) -- (1,-1);
\draw[thick] (-1,-2) -- (1,-2);
\draw[mygray,thick] (-1,-2) -- (0,0);
\draw[mygray,thick] (0,0) -- (1,-2);
\draw[mygray,thick] (-1,-1) -- (1,-2);

\draw[mygray,thick] (-1,-2) -- (0,1);
\draw[mygray,thick] (-1,-2) -- (0,2);
\draw[mygray,thick] (1,-2) -- (0,1);
\draw[mygray,thick] (1,-2) -- (0,2);

\draw[thick] (-1,-1)..  controls (-0.7,0.5).. (0,1);
\draw[thick] (-1,-1) ..  controls (-0.7,1.5).. (0,2);
\draw[thick] (1,-1) ..  controls (0.7,0.5).. (0,1);
\draw[thick] (1,-1)..  controls (0.7,1.5)..  (0,2);
\draw[thick] (0,0) -- (1,-1);
\draw[thick] (1,-1) -- (1,-2);

\filldraw[black] (0,0) circle (1pt) node[anchor=east] {\scriptsize $4$};
\filldraw[black] (0,1) circle (1pt) node[anchor=east] {\scriptsize $5$};
\filldraw[black] (0,2) circle (1pt) node[anchor=east] {\scriptsize $6$};

\filldraw[black] (1,-1) circle (1pt) node[anchor=west] {\scriptsize $3$};
\filldraw[black] (-1,-1) circle (1pt) node[anchor=east] {\scriptsize $2$};
\filldraw[black] (1,-2) circle (1pt) node[anchor=west] {\scriptsize $1$};
\filldraw[black] (-1,-2) circle (1pt) node[anchor=east] {\scriptsize $0$};

\end{tikzpicture}
\captionof{figure}{$\varphi$}   
\label{6.7}

\end{minipage}
\begin{minipage}{.5\textwidth}
\centering
\begin{tikzpicture}[scale=1]

\node (b) at (0,2.5) {$\vdots$};

\draw[thick] (0,0) -- (0,1);
\draw[thick] (0,1) -- (0,2);

\draw[thick] (0,0) -- (-1,-1);
\draw[mygray,thick] (-1,-1) -- (-1,-2);
\draw[thick] (-1,-2) -- (1,-1);
\draw[mygray,thick] (-1,-1) -- (1,-1);
\draw[thick] (-1,-2) -- (1,-2);
\draw[mygray,thick] (-1,-2) -- (0,0);
\draw[mygray,thick] (0,0) -- (1,-2);
\draw[thick] (-1,-1) -- (1,-2);

\draw[mygray,thick] (-1,-2) -- (0,1);
\draw[mygray,thick] (-1,-2) -- (0,2);
\draw[mygray,thick] (1,-2) -- (0,1);
\draw[mygray,thick] (1,-2) -- (0,2);

\draw[thick] (-1,-1)..  controls (-0.7,0.5).. (0,1);
\draw[thick] (-1,-1) ..  controls (-0.7,1.5).. (0,2);
\draw[thick] (1,-1) ..  controls (0.7,0.5).. (0,1);
\draw[thick] (1,-1)..  controls (0.7,1.5)..  (0,2);

\draw[thick] (0,0) -- (1,-1);

\draw[mygray,thick] (1,-1) -- (1,-2);

\filldraw[black] (0,0) circle (1pt) node[anchor=east] {\scriptsize $4$};
\filldraw[black] (0,1) circle (1pt) node[anchor=east] {\scriptsize $5$};
\filldraw[black] (0,2) circle (1pt) node[anchor=east] {\scriptsize $6$};

\filldraw[black] (1,-1) circle (1pt) node[anchor=west] {\scriptsize $3$};
\filldraw[black] (-1,-1) circle (1pt) node[anchor=east] {\scriptsize $2$};
\filldraw[black] (1,-2) circle (1pt) node[anchor=west] {\scriptsize $1$};
\filldraw[black] (-1,-2) circle (1pt) node[anchor=east] {\scriptsize $0$};

\node (b) at (0,-2.5) {$\scriptstyle\psi$};

\end{tikzpicture}
\captionof{figure}{$\psi$}   
\label{6.7b}

\end{minipage}

Then,  $H_1=\{2,4,5,6,\cdots\}$ and $H_2=\{3,4,5,6,\cdots\}$ are the only maximal homogeneous sets and $\N\setminus (H_1\cup H_2)=\{0,1\}$.  There is no critical pair for $\varphi$ but $\psi$ (see Figure \ref{6.7b}) is a non-trivial reconstruction of $\varphi$. 
\end{ex}

We finish this section with an example of a non-reconstructible coloring with three pairwise disjoint infinite maximal homogeneous sets. Contrarily, we have proved in Theorem \ref{colorpartition} that the coloring associated to any partition of $\N$ into three parts belongs to $\UR$. Thus, knowing that   $\h(\varphi)$ has at least three infinite maximal pairwise disjoint elements does not guarantee that $\varphi\in\UR$.

\begin{ex}
\label{3max-no-UR}
Let $A=\{a_i:i\in\N\}$, $B=\{b_i:i\in\N\}$ and $C=\{c_i:i\in\N\}$ be pairwise disjoint subsets of $\N$ such that $\N=A\cup B\cup C$. Define $\varphi:\Nt\longrightarrow 2$ by $\varphi(\{a_i,a_j\})=\varphi(\{b_i,b_j\})=\varphi(\{c_i,c_j\})=1$ for every $i\neq j$; $\varphi(\{a_0,b_0\})=\varphi(\{a_0,c_{2i}\})=\varphi(\{b_0,c_{2i+1}\})=1$ for every $i\in\N$; and $\varphi(\{n,m\})=0$ for any other $\{n,m\}\in\Nt$. See Figure \ref{6.8}.

\begin{center}
    \begin{tikzpicture}[thick, scale=0.9]
\draw[mygray, thick] (0,0) -- (2,1);
\draw[mygray, thick] (0,1) -- (2,2);
\draw[mygray, thick] (0,2) -- (2,1);
\draw[mygray, thick] (0,1) -- (2,0);
\draw[mygray, thick] (1,-1) -- (2,0);
\draw[mygray, thick] (0,0) -- (1,-2);
\draw[mygray, thick] (0,2) -- (2,2);
\draw[mygray, thick] (0,1) -- (2,1);
\draw[mygray, thick] (0,3) -- (2,2);
\draw[mygray, thick] (0,2) -- (2,3);
\draw[mygray, thick] (0,3) -- (2,3);

\draw[thick] (0,0) -- (0,3);
\draw[thick] (1,-1) -- (1,-4);
\draw[thick] (2,3) -- (2,0);
\draw[thick] (0,0) -- (2,0);
\draw[thick] (0,0) -- (1,-1);
\draw[thick] (2,0) -- (1,-2);

\draw[mygray, thick] (2,0) .. controls (1.5,-2) .. (1,-3);
\draw[mygray, thick] (0,0) .. controls (0.4,-3) .. (1,-4);
\draw[thick] (0,0) .. controls (0.5,-2) .. (1,-3);
\draw[thick] (2,0) .. controls (1.6,-3) .. (1,-4);

\filldraw[black] (0,0) circle (1pt) node[anchor=east] {\scriptsize $a_0$};
\filldraw[black] (0,1) circle (1pt) node[anchor=east] {\scriptsize $a_1$};
\filldraw[black] (0,2) circle (1pt) node[anchor=east] {\scriptsize $a_2$};
\filldraw[black] (0,3) circle (1pt) node[anchor=east] {\scriptsize $a_3$};
\filldraw[black] (2,0) circle (1pt) node[anchor=west] {\scriptsize $b_0$};
\filldraw[black] (2,1) circle (1pt) node[anchor=west] {\scriptsize $b_1$};
\filldraw[black] (2,2) circle (1pt) node[anchor=west] {\scriptsize $b_2$};
\filldraw[black] (2,3) circle (1pt) node[anchor=west] {\scriptsize $b_2$};
\filldraw[black] (1,-1) circle (1pt) node[anchor=west] {\scriptsize $c_0$};
\filldraw[black] (1,-2) circle (1pt) node[anchor=west] {\scriptsize $c_1$};
\filldraw[black] (1,-3) circle (1pt) node[anchor=west] {\scriptsize $c_2$};
\filldraw[black] (1,-4) circle (1pt) node[anchor=west] {\scriptsize $c_3$};

\node (b) at (1,3.3) {$\vdots$};
\node (b) at (1,-4.3) {$\vdots$};
\end{tikzpicture}
\captionof{figure}{Partial drawing of  $\varphi$}   
\label{6.8}
\end{center}

\medskip

Notice that $A,B$ and $C$ are maximal elements of $\h(\varphi)$. Moreover, $\{a_0,b_0\}$ is critical for $\varphi$, thus  $\varphi\in\neg\UR$, by Proposition \ref{criterio-no-UR}.
\end{ex}

\section{Complexity of the  reconstruction problem.}

This section is devoted to analyzing the reconstruction problem from the descriptive set theoretic point of view.  We  show that the problem of recovering  a coloring from the collection of its homogeneous sets can be done in a Borel way. 

The space of colorings $2^{[\N]^2}$ is endowed with the product topology which has as basic open sets $\{\varphi\in 2^{[\N]^2}: \; \varphi_0\subseteq \varphi\}$ for $\varphi_0$ a coloring on a finite subset of $\N$.  Let $K(\cantor)$ be the hyperspace of compact subsets of $\cantor$ with the Vietoris topology (see, for instance, \cite[4F]{Kechris94}). A subbasis for $K(\cantor)$ consists of the sets $V^+=\{L\in K(\cantor): \; L\subset V\}$ and $V^-=\{L\in K(\cantor): \; L\cap V\neq \emptyset\}$, for $V\subseteq \cantor$ open. As $\cantor$ is zero dimensional, we can also assume that $V$ is clopen.  We recall that a subset of a topological space is $G_\delta$ (respectively, $F_\sigma$), if it is a countable intersection of open sets (respectively, a countable union of closed sets).

 Under the usual identification of a subset of $\N$ with its characteristic function,  $\hom(\varphi)$ is not topologically closed in $\cantor$. Notice, however,  that  if $A\in cl(\h(\varphi))\setminus\h(\varphi)$, then $|A|\leq 2$.  Thus $cl(\h(\varphi))= cl(\h(\psi))$ iff $\h(\varphi)=\h(\psi)$.  Since we want to analyze the reconstruction problem from a topological point of view, it is better to work  with a closed set instead of  $\h(\varphi)$. However,   it is more convenient to use  a closed set larger than the closure. Let 
 \[
 \ch(\varphi)=\h(\varphi)\cup\{A\subset \N: |A|\leq 2\}.
 \]
 Notice that $cl(\h(\varphi))\subseteq \ch(\varphi)$ and $\ch(\varphi)$ is closed. Also, $\ch(\varphi)= \ch(\psi)$ iff $\h(\varphi)=\h(\psi)$. 
 
The main result of this section is to show that there is a Borel function $g:K(\cantor)\to 2^{[\N]^2}$ such that 
\[
\ch(g(\ch(\varphi)))=\ch(\varphi)
\]
for all $\varphi\in 2^{[\N]^2}$.
So $g(\ch(\varphi))$ is  a coloring recovered  from $\ch(\varphi)$, but notice that $g(\ch(\varphi))$ might be neither $\varphi$ nor  $1-\varphi$ when $\varphi\not\in \UR$. 

\begin{prop}
\label{RecobrableBorel}
The collection of all colorings on $\N$ belonging to $\UR$  is a dense $G_\delta$ subset of $2^{[\N]^2}$.
\end{prop}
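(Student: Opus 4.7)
My plan is to prove density and the $G_\delta$ property separately, then combine.

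For density, let $[s] = \{\varphi \in 2^{[\N]^2}: \varphi|F = s\}$ be a basic open set determined by a finite $F\subseteq \N$ and $s \in 2^{[F]^2}$. I will construct $\psi \in [s] \cap \UR$ as follows: set $\psi|F = s$ and $\psi(\{x,y\}) = 0$ whenever $\{x,y\}\not\in [F]^2$. Then $\psi$ has property $E_0$: for any finite $G\subseteq \N$, picking $z \in \N\setminus (F\cup G)$ yields $\psi(\{z,x\}) = 0$ for every $x\in G$, since $\{z,x\} \not\in [F]^2$. Proposition \ref{EiUR} gives $\psi \in \UR$, so $[s]\cap\UR\neq\emptyset$.

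For the $G_\delta$ claim, I will show $\neg\UR$ is $F_\sigma$ by realizing it as a projection. Work in the compact Polish space $(2^{[\N]^2})^2$ and set
\[
R = \{(\varphi,\psi) : \h(\varphi) = \h(\psi),\; \psi \ne \varphi,\; \psi \ne 1-\varphi\}.
\]
By Proposition \ref{triangulos}, the set $\{(\varphi,\psi) : \h(\varphi)=\h(\psi)\}$ equals
\[
\bigcap_{T\in[\N]^3}\{(\varphi,\psi) : T\in\h(\varphi) \Leftrightarrow T\in\h(\psi)\},
\]
a countable intersection of clopen conditions, hence a closed set $C$. The two disagreement constraints cut out an open set $U$. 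Being open in a zero-dimensional compact metric space, $U$ is a countable union of clopen (hence compact) sets $K_n$, so $R = \bigcup_n (C \cap K_n)$ is a countable union of compact sets. Since the second factor is compact and the ambient space is Hausdorff, the projection $\pi_1$ maps compact sets to compact (hence closed) sets, and $\neg\UR = \pi_1(R)$ emerges as a countable union of closed sets. Therefore $\UR$ is $G_\delta$, and combined with density it is a dense $G_\delta$.

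The main step requiring care is the elimination of the existential quantifier over the putative reconstruction $\psi$ in the definition of $\neg\UR$; the compactness-based projection argument above handles this cleanly, and everything else reduces to unpacking the definitions with help from Propositions \ref{triangulos} and \ref{EiUR}.
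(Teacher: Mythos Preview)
Your proof is correct and follows essentially the same approach as the paper: for the $G_\delta$ claim you exhibit $\neg\UR$ as the projection of a $K_\sigma$ set (closed condition $\h(\varphi)=\h(\psi)$ intersected with the open condition $\psi\notin\{\varphi,1-\varphi\}$) in the compact space $2^{[\N]^2}\times 2^{[\N]^2}$, exactly as the paper does. For density, the paper invokes Corollary~\ref{finitechanges} on finite changes of a constant coloring, whereas you directly verify property $E_0$ for such a coloring and appeal to Proposition~\ref{EiUR}; since Corollary~\ref{finitechanges}(i) is proved precisely via Lemma~\ref{finitechangesEi} and Proposition~\ref{EiUR}, this is the same argument unpacked.
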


\begin{proof}
Recall that $\h(\varphi)=\h(\psi)$ if and only if  $\h(\varphi)\cap [\N]^{3}=\h(\psi)\cap [\N]^{3}$ (see Proposition \ref{triangulos}).
The following set is closed:
\[
E=\{(\varphi,\psi)\in 2^{[\N]^2}\times  2^{[\N]^2}: \; \h(\varphi)=\h(\psi)\}.
\]
In fact, $(\varphi,\psi)\not\in E$ if and only if   there is $H\in [\N]^{3}$ such that either $H\in \h(\varphi)\setminus \h(\psi)$ or $H\in \h(\psi)\setminus \h(\varphi)$. For every finite set $H$, it is straightforward to verify that $ \{\varphi\in  2^{[\N]^2}: \; H\in \h(\varphi)\}$ is clopen. Thus the complement of $E$ is open.

Now consider the relation $\varphi\approx \psi$ if either $\varphi=\psi$ or  $\psi= 1-\varphi$. Clearly $\approx$ is a closed subset of  $2^{[\N]^2}\times 2^{[\N]^2}$. 
Finally we have
\[
\varphi\not\in \UR \;\Leftrightarrow \;\; \exists \psi\in  2^{[\N]^2}\; ( (\varphi,\psi)\in E \wedge (\varphi\not\approx\psi)).
\]
Thus, the collection of colorings that are not reconstructible is the  projection of a $K_\sigma$ set (i.e. a countable union of compact sets) and thus it is also $K_\sigma$. 

Finally,  from the definition of the product topology on $2^{[\N]^2}$,  given a coloring $\varphi$, we have that the collection of all its finite changes, i.e. $\{\varphi_a:\; \mbox{$a\subseteq [\N]^2$ is finite}\}$ is a dense subset of $2^{[\N]^2}$. Thus,   by Corollary \ref{finitechanges}, $\UR$ is dense.
\end{proof}

\begin{prop}
\label{funcionhom}
The function $\ch:2^{[\N]^2}\to K(2^{\N})$ given by $\varphi\mapsto \ch(\varphi)$ is Borel.
\end{prop}

\begin{proof}
Let $V$ be a clopen subset of $2^{\N}$. We  need to  show that  $C(V^+)=\{\varphi\in 2^{\Nt}:\; \ch(\varphi)\in  V^+\}$ and $C(V^-)=\{\varphi\in 2^{\Nt}:\; \ch(\varphi)\in  V^-\}$ are  Borel.  

First, if $[\N]^{\leq 2}\not\subseteq V$, then $C(V^+)=\emptyset$ and there is nothing to show. So we assume that $[\N]^{\leq 2}\subseteq V$,
In this case,  $\ch(\varphi)\subseteq V$ is equivalent to $\h(\varphi)\subseteq V$. Then, we have that
\begin{equation}
\label{vmas}
\ch(\varphi)\subseteq V \Leftrightarrow (\forall H\in [\N]^{<\omega})(H\in \h(\varphi)\rightarrow H\in V).
\end{equation}
Notice that $\{\varphi\in 2^{\Nt}:\; H\in \h(\varphi)\}$ is clopen for every finite $H\subset \N$.
From this and \eqref{vmas}  we conclude that $C(V^+)$ is $G_\delta$.

 For $C(V^-)$ notice that if $V\cap [\N]^{\leq 2}\neq \emptyset$, then $C(V^-)=2^{\Nt}$ and there is nothing to show.  Since, $[\N]^{\leq 2}$ is a closed nowhere dense subset of $\cantor$, we can assume w.l.o.g. that $V\cap [\N]^{\leq 2}=\emptyset$. In this case, we have  $\ch(\varphi)\cap V\neq\emptyset$ if and only if $\h(\varphi)\cap V\neq\emptyset$. And, as before, this happens when   $\h(\varphi)\cap [\N]^{<\omega}\cap V\neq\emptyset$. Thus $C(V^-)$ is open.
\end{proof}
 
 \medskip

Let $HOM=\{\ch(\varphi):\;\varphi\in 2^{[\N]^2}\}$.

\bigskip

\begin{prop}
\label{homgdelta}
$HOM$ is $G_\delta$ in $K(\cantor)$ and thus it is Polish.
\end{prop}

\begin{proof}
Let $\mathcal{P}_n$ denote the collection of subsets of $n+1$ of size at least 3. Let $L\in K(\cantor)$.  We claim 
\begin{eqnarray}
\label{gdelta}
L\in HOM\;\;\Leftrightarrow \;\; [\N]^{\leq 2}\subseteq L\; \&\; (\forall n\geq 2)\,(\exists\varphi\in 2^{[n+1]^2})\, ( L\cap \mathcal{P}_n=\h(\varphi)).
\end{eqnarray}

In fact, suppose $\varphi$ is a coloring on $\N$ and $L=\h(\varphi)$. Then, $\varphi|_{n+1}$ satisfies the right hand side of (\ref{gdelta}). 

For the other direction,  let $L\in K(\cantor)$ be such that $[\N]^{\leq 2}\subseteq L$ and consider the following set
\[
T_L=\{\varphi\in 2^{[n+1]^2}: \;  L\cap \mathcal{P}_n=\h(\varphi)\; \mbox{and $n\in \N$}\}.
\]
Then, $T_L$ is a finitely branching tree. If $L$ satisfies the right hand side of (\ref{gdelta}), then  $T_L$ is infinite, thus it has a branch $\varphi$ which is clearly a coloring on $\N$. Then, $L$ and $\h(\varphi)$ contain the same finite sets of size at least 3, thus $L=\ch(\varphi)$.

To see that $HOM$ is $G_\delta$ we observe the following. The collection of all $L\in K(\cantor)$ such that $ [\N]^{\leq 2}\subseteq L$ is closed. On the other hand, given $B\subseteq A\subset\cantor$ with $A$ finite. Then,  $\{L\in K(\cantor):\; L\cap A=B\}$ is $G_\delta$. 
The reason is that the relation $``x\in L"$ is closed in $\cantor\times K(\cantor)$ and we have 
\[
L\cap A=B\;\Leftrightarrow \; \forall x\in A\; (x\in L\leftrightarrow x\in B).
\]
The last claim is the classical fact that a subset of a Polish space (i.e. a completely metrizable separable space)  is itself Polish if and only if it is $G_\delta$ (see \cite[3.11]{Kechris94}).
\end{proof}

\begin{teo}
There is $g: HOM\to 2^{[\N]^2}$ Borel such that, for all $L\in HOM$,   
\[
\ch(g(L))= L.
\]
\end{teo}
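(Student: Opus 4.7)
My plan is to Borel-select the leftmost infinite branch of the tree of finite coloring approximations used in the proof of the previous proposition. For each $L\in HOM$ consider
$$T_L=\bigcup_{n\geq 2} T_L^n, \qquad T_L^n=\{\varphi\in 2^{[n+1]^2}: L\cap\mathcal{P}_n=\h(\varphi)\},$$
partially ordered by restriction. As was just shown, $T_L$ is non-empty, finitely branching, and infinite. Fix a lexicographic order $<_{\mathrm{lex}}$ on each $2^{[n+1]^2}$. I would define $g(L)$ recursively: $g(L)|_3$ is the $<_{\mathrm{lex}}$-least node in $T_L^2$ with infinitely many descendants in $T_L$, and having chosen $g(L)|_{n+1}\in T_L^n$, let $g(L)|_{n+2}\in T_L^{n+1}$ be the $<_{\mathrm{lex}}$-least extension of $g(L)|_{n+1}$ in $T_L^{n+1}$ with infinitely many descendants in $T_L$. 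König's lemma applied to the subtree above $g(L)|_{n+1}$ guarantees the recursion never stalls, and it produces a coloring $g(L)\in 2^{[\N]^2}$ satisfying $\h(g(L))\cap\mathcal{P}_n=L\cap\mathcal{P}_n$ for every $n\geq 2$. In particular $\h(g(L))\cap[\N]^3=L\cap[\N]^3$, so $\h(g(L))=L$ by Proposition \ref{triangulos}.

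The main work is to prove $g$ is Borel. I would argue by induction on $n$ that $\{L\in HOM : g(L)|_{n+1}=\sigma\}$ is Borel for every $\sigma\in 2^{[n+1]^2}$. Two auxiliary Borel predicates carry the argument. First, $\{L:\sigma\in T_L^n\}$ is Borel: it equals the finite conjunction, over finite sets $H\subseteq n+1$ of size at least $3$, of the biconditions $H\in L\Leftrightarrow H\in\h(\sigma)$, and for each fixed finite $H$ the set $\{L:H\in L\}$ is closed in $K(\cantor)$ since its complement is the Vietoris-basic open set $\{L:L\subseteq\cantor\setminus\{H\}\}$. Second, the \emph{infinite-extendability} predicate on $\sigma$ can be written as
$$\bigcap_{m>n}\ \bigcup_{\psi\in 2^{[m+1]^2},\ \psi|_{n+1}=\sigma}\{L:\psi\in T_L^m\},$$
a countable intersection of finite unions of Borel sets, hence Borel. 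Combining these ingredients with the inductive hypothesis about $g(L)|_n$ and a finite $<_{\mathrm{lex}}$-comparison inside the finite set $T_L^n$ expresses $\{L:g(L)|_{n+1}=\sigma\}$ as a Borel set. Consequently, for every pair $\{i,j\}\in[\N]^2$ the map $L\mapsto g(L)(\{i,j\})$ is Borel, which is exactly Borel measurability of $g$ into the product space $2^{[\N]^2}$.

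The principal obstacle I expect is purely bookkeeping: making sure that the \textquotedblleft leftmost infinitely-extendable\textquotedblright\ clause in the recursion remains uniform in $L$ and does not quietly escape into non-Borel descriptive complexity. No new ideas beyond the classical Borel selection of a leftmost branch through a finitely-branching tree with Borel-indexed levels are needed, since both the level-membership and the infinite-extendability predicates are manifestly Borel in $L$.
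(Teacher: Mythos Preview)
Your argument is correct, but it takes a genuinely different route from the paper's proof. The paper proceeds abstractly: it observes that the relation $A=\{(L,\varphi)\in HOM\times 2^{[\N]^2}: L=\h(\varphi)\}$ is Borel (because $\h$ is Borel by Proposition~\ref{funcionhom}), and that every vertical section $A_L$ is closed (being an $E$-equivalence class for the closed relation $E$ from Proposition~\ref{RecobrableBorel}). It then invokes the Arsenin--Kunugui--type uniformization theorem for Borel relations with $K_\sigma$ sections (Kechris~18.18) to extract a Borel selector $g$.

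Your approach is a hands-on construction: you explicitly pick the leftmost infinitely-extendable branch of the finitely-branching tree $T_L$ introduced in the proof that $HOM$ is $G_\delta$, and verify by direct computation that every level of this branch is a Borel function of $L$. The upshot is a more elementary proof that avoids the black-box uniformization theorem and, in principle, yields a tighter bound on the Borel complexity of $g$. The paper's proof, by contrast, is much shorter once the cited theorem is granted, and it cleanly isolates the two structural ingredients (Borelness of $\h$ and closedness of its fibres) rather than re-deriving an instance of uniformization from scratch. Both are sound; yours is more concrete, theirs more conceptual.
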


\begin{proof}
Consider the following relation: 
\[
A=\{(L, \varphi)\in HOM\times 2^{[\N]^2}:\; L=\ch(\varphi)\}.
\]
Since the function $\ch$ is Borel (see Proposition \ref{funcionhom}), $A$ is Borel. 
We claim that all vertical sections of $A$ are closed (and hence compact). In fact, let $E$ be the equivalence relation on $2^{[\N]^2}$ given by $\varphi E \psi$ if $\h(\varphi)=\h(\psi)$.  We have seen in the proof of Proposition \ref{RecobrableBorel} that $E$ is closed.  Recall that $\hom(\varphi)=\hom(\psi)$ iff $\ch(\varphi)=\ch(\psi)$. Thus,  for every $L\in  HOM$, if $L=\ch(\varphi)$, then $A_L$ is the  $E$-equivalence class of $\varphi$  which is closed, as promised. 

Since $HOM$ is Polish (by Proposition \ref{homgdelta}), we can use a classical uniformization theorem to define $g$.  We know   that any Borel relation on a Polish space with $K_\sigma$ sections has a Borel uniformization (see \cite[18.18]{Kechris94}). Thus,  there is a Borel map $g:HOM\to 2^{[\N]^2}$ such that $ \ch(g(L))= L$.\end{proof}

\noindent{\bf Acknowledgment:} We thank the referees for all their comments and suggestions which help to  improve the presentation of the results.

\bigskip

\end{document}